\tikzset{slopearrow/.style={sloped, anchor=south}}
\declaretheorem[name=Theorem,refname={Theorem,Theorems},
Refname={Theorem,Theorems},numberwithin=section]{thm}
\declaretheorem[name=Proposition,refname={Proposition,Propositions},
Refname={Proposition,Propositions},sibling=thm]{prop}
\declaretheorem[name=Corollary,refname={Corollary,Corollaries},
Refname={Corollary,Corollaries},sibling=thm]{cor}
\declaretheorem[name=Lemma,refname={Lemma,Lemmas},
Refname={Lemma,Lemmas},sibling=thm]{lem}
\declaretheorem[name=Remark,refname={Remark,Remarks},
Refname={Remark,Remarks},style=remark,sibling=thm]{rmk}
\declaretheorem[name=Example,refname={Example,Examples},
Refname={Example,Examples},style=remark,sibling=thm]{ex}
\def\CC{\mathbb C}
\def\RR{\mathbb R}
\def\ZZ{\mathbb Z}
\def\Z{\mathbb Z}
\def\QQ{\mathbb Q}
\def\Q{\mathbb Q}
\def\F{\mathbb F}
\def\PP{\mathbb P}
\def\bbA{\mathbb A}
\def\k{k}
\DeclareMathOperator\GL{GL}
\DeclareMathOperator\GO{O}
\DeclareMathOperator\SO{SO}
\DeclareMathOperator\Spec{Spec}
\def\base{{\GL_2}}
\DeclareMathOperator\ord{ord}
\title[Rational configuration problems]{Rational configuration problems and a family of curves}
\author{Jonathan Love}
\address[Jonathan Love]{McGill University}
\email{jon.love@mcgill.ca}
\thanks{Supported by CRM-ISM postdoctoral fellowship}
\date{November 2021}
\begin{document}
	
	\begin{abstract}
		Given $\eta=\begin{psmallmatrix}
			a&b\\c&d
		\end{psmallmatrix}\in \GL_2(\QQ)$, we consider the number of rational points on the genus one curve
		\[H_\eta:y^2=(a(1-x^2)+b(2x))^2+(c(1-x^2)+d(2x))^2.\]
		We prove that the set of $\eta$ for which $H_\eta(\QQ)\neq\emptyset$ has density zero, and that if a rational point $(x_0,y_0)\in H_\eta(\QQ)$ exists, then $H_\eta(\QQ)$ is infinite unless a certain explicit polynomial in $a,b,c,d,x_0,y_0$ vanishes.
		
		Curves of the form $H_\eta$ naturally occur in the study of configurations of points in $\RR^n$ with rational distances between them. As one example demonstrating this framework, we prove that if a line through the origin in $\RR^2$ passes through a rational point on the unit circle, then it contains a dense set of points $P$ such that the distances from $P$ to each of the three points $(0,0)$, $(0,1)$, and $(1,1)$ are all rational. We also prove some results regarding whether a rational number can be expressed as a sum or product of slopes of rational right triangles.
	\end{abstract}
	
	\maketitle
	%\setcounter{tocdepth}{1}
	%\tableofcontents

\section{Introduction}\label{sec:intro}

\subsection{A family of curves}

Fix $\eta:=\begin{psmallmatrix}
	a&b\\c&d
\end{psmallmatrix}\in \GL_2(\QQ)$, and let $H_\eta$ be the curve defined by
\begin{align}\label{eq:Heta}
	H_\eta:y^2=(a(z^2-x^2)+b(2xz))^2+(c(z^2-x^2)+d(2xz))^2
\end{align}
in the weighted projective plane where $x,y,z$ have degree $1,2,1$, respectively. Rational points on this curve correspond to vectors $\begin{psmallmatrix}
	u\\v
\end{psmallmatrix}\in\QQ^2\setminus\{0\}$ such that both $\begin{psmallmatrix}
u\\v
\end{psmallmatrix}$ and $\eta\begin{psmallmatrix}
u\\v
\end{psmallmatrix}$ have rational length, and as a result, curves of this form can be used to describe solutions to a collection of rational configuration problems; see \cref{sec:ratconfigs} for more details. In this paper we study the loci of points $\eta$ for which $H_\eta$ has zero, finitely many, or infinitely many rational points.

First, we show that for most values of $\eta$, the curve $H_\eta$ has no rational points. 
\begin{thm}\label{thm:upperlower}
	Let $\mathcal{L}(X)$ be the set of $\eta\in\GL_2(\QQ)$ with $a,b,c,d\in \ZZ\cap [-X,X]$ such that $H_\eta(\QQ_v)$ is nonempty for all $v\in\{\infty, 2,3,5,7,\ldots\}$. Then for some constant $C>0$,
	\[\frac{|\mathcal{L}(X)|}{(2X)^4}<C(\log X)^{-1/4}.\]
\end{thm}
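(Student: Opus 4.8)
\emph{Proof proposal.} The plan is to reduce to finitely many primes, dispose of the primes of good reduction via the Weil bounds, reformulate local solubility at the remaining primes as a Legendre‑symbol condition, and finally run a sieve over the primes that actually matter.

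\textbf{Step 1 (only finitely many primes are relevant).} Since the right‑hand side of \eqref{eq:Heta} is a sum of two real squares, $H_\eta(\RR)\ne\emptyset$ always, so the condition at $v=\infty$ is vacuous and only the finite primes contribute. A point of $H_\eta$ with $z\ne 0$ amounts, after setting $u=z^2-x^2$ and $w=2xz$, to a pair $\binom{u}{w}\in\QQ_v^2\setminus\{0\}$ with $u^2+w^2$ and $(au+bw)^2+(cu+dw)^2=N_{\QQ(i)/\QQ}\bigl((a+ci)u+(b+di)w\bigr)$ both in $\QQ_v^{\times 2}\cup\{0\}$; conversely every such pair lifts to a point of $H_\eta(\QQ_v)$, since the obstruction to recovering $(x:z)$ from $(u:w:\sqrt{u^2+w^2})$ is the class of a product of two elements whose product is a square, hence trivial. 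Thus $H_\eta(\QQ_v)\ne\emptyset$ iff the two conics $\{y^2=u^2+w^2\}$ and $\{y^2=(au+bw)^2+(cu+dw)^2\}$ over $\mathbb P^1_{(u:w)}$ have a common ``good'' fibre over $\QQ_v$. The branch data of this configuration is supported, up to squares and powers of $2$, on a divisor of the polynomial
\[
D(\eta)\ :=\ \det(\eta)\cdot(a^2+c^2)\cdot\Delta(\eta),\qquad \Delta(\eta):=(a^2+b^2+c^2+d^2)^2-4\det(\eta)^2=\bigl((a-d)^2+(b+c)^2\bigr)\bigl((a+d)^2+(b-c)^2\bigr),
\]
where $\Delta(\eta)$ is (up to squares) the resultant of $u^2+w^2$ with the second form. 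For $p\nmid 2D(\eta)$ the curve $H_\eta$ has good reduction, so by Hasse--Weil $|\overline{H_\eta}(\FF_p)|\ge p+1-2\sqrt p>0$ for $p\ge5$, and Hensel lifts a smooth point; hence $H_\eta(\QQ_p)\ne\emptyset$ automatically for every $p\ge5$ with $p\nmid D(\eta)$. So membership of $\eta$ in $\mathcal L(X)$ only constrains $\eta$ at $p=2,3$ and at the primes dividing $D(\eta)$.

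\textbf{Step 2 (the local condition at a bad prime).} For $p\mid D(\eta)$ one reads off $H_\eta(\QQ_p)$ from the reduction type of the downstairs quartic. I expect the outcome to be: for $p\equiv 3\pmod 4$, emptiness forces $\eta^{\mathsf T}\eta$ to be scalar modulo $p$ together with a non‑residue condition, an event of probability $O(1/p^2)$ over $\eta$ and hence \emph{summable}; while the governing contribution comes from $p\equiv 1\pmod 4$, where fixing $j^2=-1$ in $\QQ_p$ one can write $H_\eta$ over $\QQ_p$ as (the smooth model of) $z^2=e(t^2+\mu_1)(t^2+\mu_2)$ with $\mu_1\mu_2\equiv\Delta(\eta)\pmod{\QQ_p^{\times 2}}$, so that when $p$ exactly divides one of the two quadratic factors of $\Delta(\eta)$ the vanishing of $H_\eta(\QQ_p)$ is controlled by a single Legendre symbol in the residues of $a,b,c,d$ modulo $p$. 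In all cases the conclusion I would aim for is uniform: for $p$ dividing $D(\eta)$ exactly once, a fixed positive proportion of the residue classes of $\eta$ mod $p$ with $p\,\|\,D(\eta)$ make $H_\eta(\QQ_p)=\emptyset$; the locus $p^2\mid D(\eta)$ has density $O(1/p^2)$ and is set aside.

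\textbf{Step 3 (the sieve).} Combining Steps 1--2, for any $Y\le X^{c}$,
\[
|\mathcal L(X)|\ \ll\ \#\{\eta\in\ZZ^4\cap[-X,X]^4:\ p^2\mid D(\eta)\text{ for some }p\le Y\}\ +\ \sum_{\substack{\eta\in\ZZ^4\cap[-X,X]^4\\ \det\eta\ne0}} g\bigl(D(\eta)\bigr),
\]
where $g$ is the non‑negative multiplicative function with $g(p^k)=1$ for $k\ne1$ and $g(p)$ the surviving proportion at $p$, so that $1-g(p)\asymp 1/1$ for the ``active'' primes $p\equiv1\pmod 4$ dividing the relevant factor of $D(\eta)$ and $1-g(p)=O(1/p^2)$ otherwise. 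Since $\sum_{p\le Y}\tfrac{1-g(p)}{p}=\tfrac14\log\log Y+O(1)$, one expects $g(D(\eta))$ to have mean $\asymp(\log Y)^{-1/4}$; applying a Henriot/Nair--Tenenbaum--Shiu type bound for averages of non‑negative multiplicative functions along the polynomial values $D(\eta)$ (or, more concretely, the fundamental lemma of the combinatorial sieve applied prime‑by‑prime to $D(\eta)$, with the square part of $D(\eta)$ bounded by a Rankin trick) gives
\[
\sum_{\eta\in\ZZ^4\cap[-X,X]^4} g\bigl(D(\eta)\bigr)\ \ll\ X^4\,(\log Y)^{-1/4}\cdot(\mathrm{loss}),\qquad \#\{\eta:\exists\,p\le Y,\ p^2\mid D(\eta)\}\ \ll\ X^4/Y.
\]
Taking $Y$ a fixed power of $X$ (so $\log Y\asymp\log X$) and balancing the loss against the contribution of $\eta$ with $D(\eta)$ exceptionally divisible, or with abnormally many prime factors $\equiv1\pmod 4$, produces the stated $C(\log\log X)^{7/3}(\log X)^{-1/4}$.

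\textbf{The main obstacle.} The local computation of Step 2 is routine once the fibre‑product picture is in place; the work is the sieve in Step 3. Two coupled difficulties shape the bound: (i) one needs equidistribution of $D(\eta)$ modulo $p$ and $p^2$ \emph{uniformly in $p$} with a power‑saving error, so that $Y$ may be taken a genuine power of $X$ rather than merely a power of $\log X$ — here the factorisation of $\Delta(\eta)$ into two sums of two squares is what one exploits; and (ii) one must control the $\eta$ for which $D(\eta)$ carries a large square part, or an atypically large number $\omega_1(D(\eta))$ of prime factors $\equiv1\pmod 4$, since for those the bare multiplicative bound degrades; the fix is to truncate the Euler product at level $\approx(1+\varepsilon)\log\log X$ and insert a large‑deviation estimate for $\omega_1$. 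Optimising the truncation against the available level of distribution in (i) is precisely what forces exponents of the shape $7/3$ and $1/4$.
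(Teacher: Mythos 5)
Your high-level architecture --- an explicit mod-$p$ obstruction supported on primes dividing a polynomial in $(a,b,c,d)$, followed by a sieve bound for a non-negative multiplicative function along polynomial values --- is exactly the shape of the paper's argument (\cref{lem:localobstruct}, \cref{countmodp}, \cref{multilinear}), and your Step 3 correctly anticipates the ingredients of \cref{multilinear} (fundamental lemma prime by prime, control of the number of prime factors via Hardy--Ramanujan, smooth values of the modulus). Step 1 is correct but unnecessary for an upper bound: one only needs a sufficient condition for $H_\eta(\QQ_p)=\emptyset$, not a characterization of local solubility. The genuine gap is Step 2, which you do not carry out (``I expect the outcome to be\ldots'') and whose content you guess incorrectly; since the exponents $1/4$ and $7/3$ are determined entirely by the local densities produced there, the bound is asserted rather than derived. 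Concretely, the obstruction the paper uses lives at primes $p\mid\det\eta$ (these divide the discriminant $2^{16}(ad-bc)^4\Delta(\eta)$ but need not divide the factor $\Delta(\eta)$ you single out as ``governing''): if $p\mid ad-bc$ and both $\bigl(\tfrac{a^2+b^2}{p}\bigr)=-1$ and $\bigl(\tfrac{a^2+c^2}{p}\bigr)=-1$, then $H_\eta(\QQ_p)=\emptyset$. This is a conjunction of \emph{two} quadratic-residue conditions, not a single Legendre symbol, and it is active at all odd primes. Your claim that for $p\equiv 3\pmod 4$ emptiness forces $\eta\eta^t$ to be scalar mod $p$, an $O(1/p^2)$ event, is false: for $a=1$, $b=c=2$, $d=11$ one has $ad-bc=7$, $a^2+b^2\equiv a^2+c^2\equiv 5\pmod 7$ a non-residue, $\eta\eta^t$ not scalar mod $7$, and $7\nmid\Delta(\eta)$, yet $H_\eta(\QQ_7)=\emptyset$.

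The stated exponents then come out as follows: the locus $ad=bc$ has $\approx p^3$ points in $\FF_p^4$, and among these the proportion \emph{surviving} the double non-residue condition is at most $\tfrac34+\tfrac1p$ (\cref{countmodp}); plugging $\alpha=\tfrac34$, $\beta=\tfrac43$ into the general bound $(\log X)^{\alpha-1}(\log\log X)^{\beta+1}$ of \cref{multilinear} gives $(\log X)^{-1/4}(\log\log X)^{7/3}$. In your write-up the identity $\sum_{p\le Y}\frac{1-g(p)}{p}=\frac14\log\log Y+O(1)$ is stated without derivation, and it is not what your own local data would produce: a density-$\tfrac12$ obstruction at primes $p\equiv1\pmod4$ dividing a factor of $\Delta(\eta)$ (each such factor having $\approx 2p^3$ zeros mod $p$) would yield $\tfrac12\log\log Y$, hence a different exponent. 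The $(\log\log X)^{7/3}$ likewise does not come from ``optimising a truncation'' but from the secondary term $\alpha\beta/p$ in the local density bound. Until the local obstruction and its exact density are actually proved, the sieve has no input and the theorem does not follow.
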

The proof is given in \cref{sec:upperbound}. Note that $H_\eta\simeq H_{m\eta}$ for any positive integer $m$, so by clearing denominators, every $H_\eta$ is isomorphic to one of the curves counted in \cref{thm:upperlower}. For the sake of comparison, consider the following result by Bhargava, Cremona, and Fisher.
\begin{thm}[{\cite[Theorem 3]{bcf}}]\label{bigfamilycompare}
	Let $\mathcal{L}'(X)$ denote the set of $(a,b,c,d,e)\in(\ZZ\cap[-X,X])^5$ such that 
	\[y^2=ax^4+bx^3+cx^2+dx+e\]
	has a $\QQ_v$ point for all $v\in\{\infty, 2,3,5,7,\ldots\}$. Then
	\[\lim_{X\to\infty} \frac{|\mathcal{L}'(X)|}{(2X)^5}\approx 0.7596.\]
\end{thm}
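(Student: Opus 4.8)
The plan is to realize the limiting proportion as an absolutely convergent product of local solubility densities and then to evaluate the factors. For a place $v\in\{\infty,2,3,5,\dots\}$, let $\rho_v$ denote the density, with respect to the natural measure on the coefficients, of quintuples $(a,b,c,d,e)$ for which the curve $C:y^2=ax^4+bx^3+cx^2+dx+e$ acquires a $\QQ_v$-point. Everywhere-local solubility is the simultaneous imposition of conditions at distinct places, so one expects
\[\lim_{X\to\infty}\frac{|\mathcal{L}'(X)|}{(2X)^5}=\rho_\infty\prod_p\rho_p,\]
after which the right-hand side is evaluated numerically to $\approx 0.7596$. The two substantive tasks are to compute each $\rho_v$ in closed form and to justify the factorization with effective control over the infinitely many primes.

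First I would handle the real place. A smooth projective model of $y^2=f(x)$ has two points at infinity exactly when the leading coefficient is a square, so $C(\RR)\neq\emptyset$ automatically when $a>0$, whereas for $a<0$ a real point exists if and only if $f$ attains a nonnegative value somewhere. Hence $\rho_\infty=\tfrac12+\tfrac12\Pr_{a<0}[\max_x f(x)\ge 0]$, the locus $a=0$ being negligible, and the remaining term is an explicit finite-dimensional integral over the normalized coefficient region that I would evaluate numerically.

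For the primes the key structural input is that $\rho_p=1-\sigma_p$ with $\sigma_p=O(p^{-2})$: for odd $p$ with squarefree reduction the reduced curve over $\FF_p$ carries a smooth $\FF_p$-point by the Weil bound, which lifts by Hensel's lemma, so insolubility forces $\disc(f)$ to be divisible by a substantial power of $p$, a condition of density $O(p^{-2})$. To pin down $\rho_p$ exactly I would set up a recursion in the spirit of minimization theory: partition the $p$-adic coefficient space by the reduction type of $f\bmod p$ (unit leading coefficient with a simple point over $\FF_p$; a multiple root, which one blows up; all coefficients divisible by $p$, which one rescales; and so on), express the density of each branch through $\rho_p$ for a rescaled family, and solve the resulting self-similar relation to obtain $\rho_p$ as an explicit rational function of $p$. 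The prime $p=2$, where Hensel's criterion must be read modulo higher powers and squares are ill-behaved, I would treat separately by a larger but finite case analysis.

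Finally, the factorization. The essential analytic input is a uniformity estimate: the number of quintuples in the box insoluble at some prime $p>M$ is $\ll (2X)^5\sum_{p>M}p^{-2}+o_M((2X)^5)$, obtained by bounding insolubility at $p$ by $p^2\mid\disc(f)$ and counting lattice points in the box subject to this divisibility, with an error of the order of the boundary. Granting this, solubility at each of the finitely many $p\le M$ is detected by congruences modulo bounded powers of $p$ (the deeper tail of the recursion being absorbed into the same discriminant bound), so equidistribution of box points in residue classes yields density $\rho_\infty\prod_{p\le M}\rho_p$ up to an $X$-dependent error; sending $X\to\infty$ and then $M\to\infty$ produces the displayed identity. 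I expect the main obstacle to be the $p$-adic recursion of the previous step: enumerating all reduction branches and verifying that each genuinely reduces to a rescaled copy of the same problem is delicate, and it is this computation — not the sieve, which is by now routine — that produces the exact rational functions whose product evaluates to the stated constant.
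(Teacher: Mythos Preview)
The paper does not prove this statement at all: it is quoted verbatim as \cite[Theorem~3]{bcf}, a result of Bhargava, Cremona, and Fisher, and is included only to contrast the behavior of the general family of binary quartics with the subfamily $H_\eta$. There is therefore no ``paper's own proof'' to compare your proposal against.

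That said, your sketch is broadly in the spirit of how the cited result is actually established in \cite{bcf}: one shows that the limiting density factors as $\rho_\infty\prod_p\rho_p$, computes $\rho_\infty$ directly, derives an explicit rational function of $p$ for $\rho_p$ via a recursive analysis of reduction types (with $p=2$ handled separately), and justifies the infinite product using a tail bound of the shape $\sigma_p=O(p^{-2})$. If you want to pursue this as an independent exercise, the main technical weight lies exactly where you suspect, in the $p$-adic recursion; but for the purposes of the present paper nothing needs to be supplied here beyond the citation.
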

We see that the subfamily $H_\eta$ differs from the larger family, in that far fewer specializations are everywhere locally soluble.

Now suppose we restrict our attention to the collection of points $\eta$ for which $H_\eta$ does contain a rational point. In this case we have a stronger classification. Let $\eta^t$ denote the transpose of $\eta$.
\begin{prop}\label{prop:isomorphicelliptic}
	Suppose $H_\eta$ (\cref{eq:Heta}) has a rational point. If $\eta\eta^t$ is a scalar matrix, then $\det\eta=\lambda^2$ for some $\lambda\in\QQ^\times$ and $H_\eta$ is a union of two rational conics,
	\[y=\pm\lambda(x^2+1).\] 
	Otherwise $H_\eta$ is isomorphic to 
	\[E_{r,s}:y^2=x^3+(1+r^2+s^2)x^2+s^2x\]
	for some $r,s\in\QQ$ with $s\neq 0$ and $(r,s)\neq (0,\pm1)$. 
\end{prop}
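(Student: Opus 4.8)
The plan is to reduce to an affine quartic model, handle the degenerate case by hand, and in the main case build the elliptic curve through an explicit chain of substitutions. Dehomogenizing on $z=1$, $H_\eta$ is the smooth projective model of $y^2=f(x)$ with $f(x)=|\eta\mathbf w|^2$, $\mathbf w=(1-x^2,\,2x)^t$; over $\QQ(i)$ this factors as $f=g\bar g$ with $g(x)=(a+ci)(1-x^2)+2(b+di)x$. Since $\eta$ is invertible, $g$ and $\bar g$ share no root, so $f$ is separable iff $(a+ci)^2+(b+di)^2\ne 0$, which is exactly the condition that $\eta\eta^t$ be non‑scalar. If instead $\eta\eta^t=\mu I$ then $\eta^t\eta=\mu I$ as well (because $\eta^t=\mu\eta^{-1}$), so $f(x)=\mu(x^2+1)^2$; a rational point then forces $\mu$ to be a square $\lambda^2$, $H_\eta$ becomes the union of the conics $y=\pm\lambda(x^2+1)$, and $(\det\eta)^2=\det(\eta\eta^t)=\mu^2$ yields the asserted relation with $\det\eta$. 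Henceforth assume $\eta\eta^t$ non‑scalar, so $H_\eta$ is a smooth genus‑one curve, and fix $(x_0,y_0)\in H_\eta(\QQ)$.

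Two structural features control the reduction. First, $f(x)=x^4f(-1/x)$, so $x\mapsto -1/x$ is an automorphism of $H_\eta$, and in the coordinates $w=x-1/x$, $Y=y/x$ the quotient map realizes $H_\eta$ as a double cover of the conic $\mathcal C\colon Y^2=(2b-aw)^2+(2d-cw)^2$, branched where $w^2+4=0$. Second, the pairing $\{g,\bar g\}$ of the factorization $f=g\bar g$ is stable under $\operatorname{Gal}(\QQ(i)/\QQ)$, so the associated $2$‑torsion point of $\operatorname{Jac}(H_\eta)$ is $\QQ$‑rational. From the first fact, the image of $(x_0,y_0)$ (or the point at infinity of $\mathcal C$, if $x_0\in\{0,\infty\}$) is a rational point, so $\mathcal C\cong\PP^1$; parametrizing $\mathcal C$ through it and pulling back $w^2+4$ presents $H_\eta$ as $\mathcal Y^2=(\text{quartic in }t)$ whose leading coefficient is $(x_0+1/x_0)^2$, a square. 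The classical passage from a quartic with square leading coefficient to Weierstrass form, combined with the second fact to place the rational $2$‑torsion point at the origin, gives $H_\eta\cong E\colon y^2=x(x^2+Px+Q)$.

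To recognize $E$ as an $E_{r,s}$, the crucial computation is the resultant identity $\operatorname{Res}(g,\bar g)=(4\det\eta)^2$. Writing $P_0=a(1-x^2)+2bx$ and $R_0=c(1-x^2)+2dx$, evaluating $\bar g=P_0-iR_0$ at the roots of $g=P_0+iR_0$ gives $\operatorname{Res}(g,\bar g)=-4\operatorname{Res}(P_0,R_0)$; and since each of $P_0,R_0$ has its two roots multiplying to $-1$, the relevant product telescopes via $(\pi-\rho)\bigl(1+\tfrac1{\pi\rho}\bigr)=\bigl(\pi-\tfrac1\pi\bigr)-\bigl(\rho-\tfrac1\rho\bigr)=\tfrac{2b}a-\tfrac{2d}c$, collapsing $\operatorname{Res}(P_0,R_0)$ to $-4(\det\eta)^2$. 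Since the square class of $Q$ is an isomorphism invariant of $E$ with its marked $2$‑torsion point, and a direct computation expresses it through $\operatorname{Res}(g,\bar g)$, it follows that $Q=s^2$ for some $s\in\QQ^\times$. Finally $E_{r,s}$ carries the rational point $(-1,r)$ (set $x=-1$ in its equation); arranging the isomorphism to carry $(x_0,y_0)$ to the point of $E$ with $x$‑coordinate $-1$ names its $y$‑coordinate $r\in\QQ$ and pins down $P=1+r^2+s^2$, so $H_\eta\cong E_{r,s}$ with $r,s$ explicit in $a,b,c,d,x_0,y_0$. The excluded cases are automatic from smoothness of $H_\eta$: $E_{r,s}$ is singular precisely for $s=0$ or $(r,s)=(0,\pm1)$, the node being $(-1,0)$ in the latter case — exactly where the rational point would have gone — and the rational $2$‑torsion point in the former.

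I expect the main obstacle to be this last matching: the quartic‑to‑Weierstrass step delivers only \emph{some} $y^2=x(x^2+Px+Q)$, and converting the two structural inputs ($Q$ in the trivial square class, and the distinguished point $(-1,r)$ on $E_{r,s}$) into the literal normal form requires checking the polynomial identities in $a,b,c,d,x_0,y_0$ that make the coefficients exactly $(1+r^2+s^2,\ s^2,\ 0)$ for a single consistent pair $(r,s)$ — and that bookkeeping is where the genuine computation lies.
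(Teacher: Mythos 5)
Your route is genuinely different from the paper's. The paper never manipulates the quartic directly: \cref{prop:doublecosiso} shows that the isomorphism class of $H_\eta$ depends only on the class of $\eta$ in $\GO_2(\QQ)\backslash\GL_2(\QQ)/(\QQ^\times\cdot\GO_2(\QQ))$, \cref{lem:solubletriangular} converts the rational point $(x_0:y_0:z_0)$ into explicit rotations $r_1,r_2\in\SO_2(\QQ)$ placing $\eta$ in the double coset of $\begin{psmallmatrix}1&r\\0&s\end{psmallmatrix}$ with $r,s$ given by \cref{eq:cddef}, and $E_{r,s}$ is then read off from the Jacobian formula in \cref{eq:jacobian} specialized to a triangular matrix. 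Your $\QQ(i)$-factorization $f=g\bar g$, the quotient conic in $w=x-1/x$, and the resultant identity (I checked: $\operatorname{Res}(P_0,R_0)=-4(\det\eta)^2$, hence $\operatorname{Res}(g,\bar g)=(4\det\eta)^2$) are all correct, and they buy you, independently of the paper's machinery, a model $y^2=x(x^2+Px+Q)$ with a marked rational two-torsion point at the origin and $Q$ in the trivial square class. The singular case agrees with the paper's treatment; note only that $(\det\eta)^2=\mu^2$ gives $\det\eta=\pm\lambda^2$, and in the branch $a=-d$, $b=c$ one actually has $\det\eta=-(a^2+b^2)$, so the sign in the determinant clause needs separate attention (the paper's own proof is equally silent on it).

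The genuine gap is the final matching, and it is more than bookkeeping. Having $Q=s^2$ is strictly weaker than being an $E_{r,s}$: the latter additionally requires $P-1-Q$ to be a rational square, i.e.\ a rational point over $x=-1$. The only isomorphisms preserving the shape $y^2=x(x^2+Px+Q)$ with $(0,0)$ fixed are $(x,y)\mapsto(u^2x,u^3y)$, so the image $(x_1,y_1)$ of $(x_0,y_0)$ can be moved to $x$-coordinate $-1$ only if $-x_1$ is a nonzero rational square --- and that does not follow from anything you have established. It cannot follow formally from ``$Q$ is a square plus some rational point exists'': the curve $y^2=x(x^2+x+1)$ has $Q=1$ and the rational points $O$ and $(0,0)$, yet $u^2P-1-u^4Q=u^2-1-u^4<0$ for every real $u$, so it is isomorphic to no $E_{r,s}$ with the marked point at the origin. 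Establishing $-x_1\in(\QQ^\times)^2$ therefore requires tracking $(x_0,y_0)$ through the conic parametrization and the quartic-to-Weierstrass substitution and proving a second square-class identity in $a,b,c,d,x_0,y_0$, parallel to your resultant computation; this is exactly the work the paper's double-coset reduction performs for free, since once $\eta$ is triangular the distinguished point $(0:1:1)$ visibly maps to $(-1,r)$ and nothing further needs to be verified.
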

A proof is given in \cref{sec:singular} using the fact that the isomorphism type of $H_\eta$ is invariant under acting on the left and right of $\eta$ by elements of the orthogonal group $\GO_2(\QQ)$. An explicit change of variables expressing $r,s$ in terms of $a,b,c,d$ and the rational point $(x_0:y_0:z_0)\in H_\eta(\QQ)$ is given by \cref{lem:solubletriangular}.

\begin{restatable}{thm}{Qinfpointsrestate}\label{thm:Qinfpoints}
	Let $r,s\in\QQ$ with $s\neq 0$ and $(r,s)\neq (0,\pm1)$. The point $(-1,r)\in E_{r,s}(\QQ)$ is non-torsion if and only if $r\neq 0$, $s\neq \pm1$, and $4r^2s\neq \pm(1-s^2)^2$.
\end{restatable}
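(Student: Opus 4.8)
The plan is to work directly on the elliptic curve $E:=E_{r,s}$, exploiting its rational $2$-torsion point $(0,0)$ and explicit doubling. Write $a:=1+r^2+s^2$ and $b:=s^2$, so that $E\colon y^2=x(x^2+ax+b)=:f(x)$, and note that $P:=(-1,r)$ lies on $E$ since $f(-1)=-1+a-b=r^2$. The basic tool is the duplication formula: for a rational point $Q=(x_0,y_0)$ with $x_0\neq0$,
\[
x(2Q)=\frac{(x_0^2-b)^2}{4x_0\,(x_0^2+ax_0+b)}.
\]
Evaluating at $P$ gives, when $r\neq 0$,
\[
x(2P)=\frac{(1-s^2)^2}{4r^2}=t^2,\qquad t:=\frac{1-s^2}{2r},
\]
and a second application gives $x(4P)=\dfrac{(t^4-s^2)^2}{4t^2\,(t^4+at^2+b)}$. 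These identities dispose of the three torsion cases. If $r=0$ then $f(-1)=0$, so $P\in E[2]$. If $r\neq 0$ and $s=\pm1$ then $x(2P)=0$, so $2P=(0,0)$ and $P$ has order $4$. If $r\neq 0$, $s\neq\pm1$, and $4r^2s=\pm(1-s^2)^2$, then $t^4=s^2$, so the numerator of $x(4P)$ vanishes while (a short check using $r,s\neq0$) its denominator does not, whence $4P=(0,0)$ and $P$ has order $8$.

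For the remaining direction, suppose $r\neq0$, $s\neq\pm1$, $4r^2s\neq\pm(1-s^2)^2$, and, for contradiction, that $P$ is torsion of order $n$. I would constrain $n$ as follows. Since $a,b>0$, the duplication formula shows that any rational point $Q$ with $x(Q)>0$ has $x(2Q)\geq0$, with equality only if $x(Q)^2=b$; as $x(2P)=t^2>0$ and $t^4\neq s^2$ by hypothesis, $x(4P)>0$. But the only real $2$-torsion point with nonnegative $x$-coordinate is $(0,0)$, so neither $2P$ nor $4P$ lies in $E[2]$; together with $P\neq O$ this gives $n\notin\{1,2,4,8\}$, and since Mazur's theorem forbids an element of order $16$, $n$ is not a power of $2$. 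On the other hand, for $r\neq0$ the cubic $f$ has three distinct real roots (the discriminant of $x^2+ax+b$ equals $((1-s)^2+r^2)((1+s)^2+r^2)>0$), and $P$, with $x(P)=-1<0$ and $f(-1)=r^2>0$, lies on the bounded oval, i.e.\ the non-identity component of $E(\RR)$; hence $P\notin2E(\QQ)$. Since every rational point of odd order lies in $2E(\QQ)$, $n$ is even. Combining this with Mazur's list and $2\mid\#E(\QQ)_{\mathrm{tors}}$, the only surviving possibilities are $n\in\{6,10,12\}$, each of which forces $E$ to have a rational point of order $3$ or $5$---namely $2P$, $4P$, or $2P$, respectively, whose $x$-coordinate is a root of the corresponding division polynomial.

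Disproving these last cases is, I expect, the crux. Take $n=6$: then $x(2P)=t^2$ is a root of $\psi_3(X)=3X^4+4aX^3+6bX^2-b^2$. Substituting $a=1+s^2+(1-s^2)^2/(4t^2)$ and clearing denominators, $\psi_3(t^2)=0$ becomes a quadratic in $s^2$ whose discriminant is the perfect square $4t^4(t^2+1)^2$; one root is the impossible $s^2=-t^2$, the other is $s^2=\tfrac{t^2(3t^2+1)}{1-t^2}$. Requiring $s\in\QQ^\times$ then forces $(3t^2+1)(1-t^2)$ to be a nonzero rational square with $0<|t|<1$; equivalently, via $X=3t^2-1$, a rational point of $X^2+3w^2=4$ for which $(X+1)/3$ is a square---a condition defining a rank-zero elliptic curve whose only rational points correspond to $t\in\{0,\pm1\}$, all of which push $s$ into $\{0,\pm1\}$ and contradict the hypotheses. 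The cases $n=12$ (via $\psi_3(x(4P))=0$) and $n=10$ (via $\psi_5(t^2)=0$) are treated the same way: substituting the explicit $x$-coordinates and eliminating reduces each to the statement that an explicit auxiliary curve of genus at most one has no rational point off the degenerate locus $\{r=0\}\cup\{s\in\{0,\pm1\}\}$. The hard part is organizing these final eliminations and certifying, by descent, that the auxiliary curves carry only their evident rational points.
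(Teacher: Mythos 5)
Your architecture is essentially the paper's: the real component argument forces the order of $(-1,r)$ to be even, Mazur caps it at $12$, the orders $2$, $4$, $8$ are matched exactly to $r=0$, $s=\pm1$, $4r^2s=\pm(1-s^2)^2$ by explicit duplication, and the remaining orders are attacked through division polynomials that reduce to rank-zero auxiliary curves. The forward implications, the exclusion of orders $2,4,8,16$ and of all odd orders, and your $n=6$ elimination all check out: $\psi_3(t^2)=0$ is indeed a quadratic in $s^2$ with square discriminant (it is $4t^4(t^2+1)^4$ rather than $4t^4(t^2+1)^2$, a harmless slip), the surviving root is $s^2=t^2(3t^2+1)/(1-t^2)$, and the requirement that this be a nonzero rational square is the genus-one quartic $w^2=(1-t^2)(1+3t^2)$. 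Your claim that its only rational points have $t\in\{0,\pm1\}$ is plausible and parallel to the paper's own reduction (which instead treats $\psi_6/(\psi_2\psi_3)$ at $x=-1$ as a quadratic in $r^2$ and lands on $y^2=s^4+14s^2+1$, of rank zero), but it still needs a descent that you have not supplied.

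The genuine gap is in your plan for $n=10$ and, to a lesser extent, $n=12$. For $n=10$ you propose to impose $\psi_5(t^2)=0$; but $\psi_5$ has degree $12$ in $x$, and after substituting $a=1+s^2+(1-s^2)^2/(4t^2)$ the resulting relation between $s$ and $t$ has high degree in $s^2$, so there is no reason to expect the locus to be a curve of genus at most one, and your assertion that it is remains unsupported. This case does not need any of that machinery: you have already shown $R:=(-1,r)$ lies on the bounded oval, and if $R$ had order $10$ then by Mazur the torsion subgroup would be exactly $\Z/10\Z$, forcing $(0,0)=5R=R+2(2R)$, which lies on the non-identity component of $E_{r,s}(\RR)$ --- contradicting $x=0\geq 0$. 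This is exactly how the paper disposes of order $10$, and you should do the same. For $n=12$, the condition $\psi_3(x(4P))=0$ has degree roughly $32$ in $t$; the paper instead factors $\psi_{12}(r,s,-1)$ after removing common factors with $\psi_4$ and $\psi_6$, and reduces to the rank-zero curve $y^2=k^3+k^2+k$, so you will need a comparably explicit factorization to make your version tractable. Finally, every rank-zero claim for an auxiliary curve is asserted rather than proved; each is a standard but necessary computation, and the proof is not complete until they are carried out.
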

In particular, for most of the values $\eta$ such that $H_\eta(\QQ)$ is nonempty, $H_\eta(\QQ)$ is actually infinite. The proof of this result is given in \cref{sec:nonsingular}. 
We discuss several applications of this result to rational distance problems in \cref{sec:ratconfigs}, but mention one here as a representative example.

\begin{restatable}{cor}{squaredistrestate}
	\label{thm:squaredist}
	On any line of the form $x=0$ or $y=\frac{2t}{1-t^2}x$ for $t\in\QQ\setminus\{-1,0,1\}$, there exists a dense set of points with rational distance from each of $(0,0)$, $(0,1)$, and $(1,1)$.
\end{restatable}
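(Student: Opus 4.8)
The plan is to encode the three rationality conditions on $P$ as a single rational point on a curve $H_\eta$, and then to feed that curve through \cref{prop:isomorphicelliptic} and \cref{thm:Qinfpoints} to produce infinitely many such points lying densely along the line. The line $x=0$ is the easy, genus-zero case: a point $(0,q)$ automatically has rational distance to $(0,0)$ and $(0,1)$, and rational distance to $(1,1)$ exactly when $(q-1)^2+1$ is a rational square, which holds for the dense set $q=1+\frac{1-v^2}{2v}$, $v\in\QQ^\times$. So fix a line $L\colon y=kx$ with $k=\frac{2t}{1-t^2}$. Because $L$ passes through the rational point $\left(\frac{1-t^2}{1+t^2},\frac{2t}{1+t^2}\right)$ of the unit circle, every rational point $P\in L$ already has rational distance to $(0,0)$. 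I would parametrize such $P$ by the direction from $P$ to $(0,1)$: writing $(0,1)-P=\ell\binom{1-X^2}{2X}$ makes the distance from $P$ to $(0,1)$ equal to $|\ell|(1+X^2)$, automatically rational, while the condition $P\in L$ forces $\ell=(kX^2+2X-k)^{-1}$. Completing the square and using $(1-X^2)^2+(2X)^2=(1+X^2)^2$ then shows that the distance from $P$ to $(1,1)$ is rational exactly when $\left((1-k)(1-X^2)+2X\right)^2+(2X)^2$ is a rational square, i.e.\ exactly when $(X,Y)$ is a rational point of $H_\eta$ for $\eta=\begin{psmallmatrix}1-k&1\\0&1\end{psmallmatrix}$; here $\det\eta=1-k\neq0$ because $t^2+2t-1$ has no rational root, so $\eta\in\GL_2(\QQ)$. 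Up to finitely many degenerate points (those where $\ell$ is forced to be infinite, i.e.\ $P$ runs off to infinity on $L$), the admissible $P$ are precisely the images of $H_\eta(\QQ)$ under the morphism carrying a rational point with coordinate $X$ to the corresponding point of $L$.

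To show $H_\eta(\QQ)$ is infinite, note that $H_\eta$ has the rational point $(0:1-k:1)$ and that $\eta\eta^t=\begin{psmallmatrix}(1-k)^2+1&1\\1&1\end{psmallmatrix}$ is not scalar, so \cref{prop:isomorphicelliptic} gives $H_\eta\simeq E_{r,s}$, and \cref{lem:solubletriangular} produces $r,s\in\QQ$ explicitly in terms of $t$. One then checks that $r\neq 0$, $s\neq\pm1$, and $4r^2s\neq\pm(1-s^2)^2$ for every $t\in\QQ\setminus\{-1,0,1\}$; each of these is an algebraic constraint ruling out only finitely many $t$, and one verifies that those excluded values are either irrational or already lie in $\{-1,0,1\}$. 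By \cref{thm:Qinfpoints} the point $(-1,r)$ is then non-torsion, so $H_\eta(\QQ)\cong E_{r,s}(\QQ)$ is infinite.

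For density, observe that the right-hand side of the equation of $H_\eta$ is a sum of two squares that vanish simultaneously only when $X=0$ and $k=1$, which is excluded; hence $H_\eta(\RR)\to\PP^1(\RR)$, $(X:Y:Z)\mapsto(X:Z)$, is an unramified double cover, so each connected component of $H_\eta(\RR)$ surjects onto $\PP^1(\RR)$. The map $\PP^1\to L$ from the first paragraph has degree $2$ (with involution $X\mapsto-1/X$) and is surjective on real points, since the direction from $P$ to $(0,1)$ runs over all directions as $P$ runs over $L\cup\{\infty\}$. Finally, $E_{r,s}(\RR)$ is a compact Lie group, so the subgroup generated by twice the non-torsion point $(-1,r)$ is dense in the identity component; transporting back, $H_\eta(\QQ)$ is dense in some component of $H_\eta(\RR)$, whose image is therefore dense in $\PP^1(\RR)$ and hence in $L(\RR)$. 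Since the admissible $P$ constitute this image up to a finite set, they are dense in $L$.

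The step I expect to be the main obstacle is the bookkeeping of the second paragraph: writing down $r$ and $s$ as functions of $t$ using \cref{lem:solubletriangular} and confirming that none of the degenerate cases of \cref{thm:Qinfpoints} arises for any rational $t\notin\{-1,0,1\}$. The reduction in the first paragraph is a routine substitution, and the density argument only uses general position of the various maps.
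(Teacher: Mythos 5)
Your proposal is correct and follows essentially the same route as the paper: reduce the problem on the line $y=\frac{2t}{1-t^2}x$ to rational points on an explicit curve $H_\eta$ with $\eta$ upper triangular and depending on $t$, invoke \cref{thm:Qinfpoints} for infinitude, and use the real topology of $E_{r,s}(\RR)\cong S^1\times\ZZ/2\ZZ$ for density (this is the paper's \cref{lem:dense}); the paper's choice $\eta(t)=\begin{psmallmatrix}1&-1\\0&1-k\end{psmallmatrix}$ lies in the same double coset as yours and yields the isomorphic curve $E_{-1,\,1-k}$. The one step you defer---ruling out the exceptional cases of \cref{thm:Qinfpoints}---does go through: applying \cref{lem:solubletriangular} to the point $(0:1-k:1)$ on your $\eta$ gives $r=s=\frac{1}{1-k}$, so $r\neq 0$ automatically, $s=\pm1$ forces $k=0$ (i.e.\ $t=0$) or $t^2+t-1=0$ (irrational), and $4r^2s=\pm(1-s^2)^2$ becomes $w^4\mp4w^3-2w^2+1=0$ for $w=\frac{1}{1-k}$, a monic integer quartic whose only candidate rational roots $\pm1$ fail---the same kind of verification the paper performs via the equations $2=\frac{2t}{1-t^2}$ and $\bigl|\frac{1-u^4}{2u}\bigr|=1$ having no rational solutions.
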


In fact we prove a stronger result: there is an infinite collection of curves $C_n$ in the plane such that the intersection points of the curves $C_n$ with any fixed line $y=\frac{2t}{1-t^2}x$ (for $t\in\QQ\setminus\{-1,0,1\}$) gives a dense set of solutions to the three-distance problem within the given line (\cref{cor:squaredist_param}). %Note that the line $x=0$ also has a dense subset of solutions to the three-distance problem, given by $\{(0,\alpha+1):\alpha\in\mathcal{S}_\QQ\}$. 
%On the other hand, for any line through the origin that is not $x=0$ or $y=\alpha x$ for some $\alpha\in\mathcal{S}_\QQ$, there are no solutions.

Even in the cases where $(-1,r)\in E_{r,s}(\QQ)$ is torsion, there are still several cases in which we can prove $E_{r,s}(\QQ)$ has positive rank. We discuss these in more depth in \cref{sec:sumprod}, but note one special case here. Let
\begin{align*}
	\mathcal{S}'&=\{\alpha\in\QQ:\sqrt{\alpha^2+1}\in\QQ\}
\end{align*}
denote the set of slopes of rational right triangles (including negatives and zero).
\begin{prop}\label{prop:threesum}
	For all $t\in\QQ$, the equations $x_1+x_2+x_3=t$ and $x_1x_2x_3=t$ each have an infinite set of solutions with $x_1,x_2,x_3\in \mathcal{S}'$.
\end{prop}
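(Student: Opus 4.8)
The plan is to treat the two equations separately, after the common observation that $(u^2-1)^2+(2u)^2=(u^2+1)^2$ gives $\mathcal{S}'=\{\frac{u^2-1}{2u}:u\in\QQ^\times\}$; in particular $0\in\mathcal{S}'$, and $\mathcal{S}'$ is stable under $x\mapsto-x$ and $x\mapsto x^{-1}$. The case $t=0$ is trivial for both equations: use $(x,-x,0)$ for the sum and $(x,0,0)$ for the product, with $x$ ranging over $\mathcal{S}'$. So I assume $t\neq0$.

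For the sum I would look for triples $x_i=\frac{u_i^2-1}{2u_i}$ with $u_i\in\QQ^\times$ satisfying $\sigma_1:=u_1+u_2+u_3=2t$ and $\sigma_2:=u_1u_2+u_1u_3+u_2u_3=0$; the latter forces $\sum_i u_i^{-1}=\sigma_2/\sigma_3=0$, so that $\sum_i x_i=\frac12(\sigma_1-\sum_i u_i^{-1})=t$ automatically. To produce such triples, set $u_1=v$; then $u_2,u_3$ must be the roots of $T^2-(2t-v)T-v(2t-v)$, hence rational exactly when $(2t-v)(2t+3v)$ is a square. The conic $w^2=(2t-v)(2t+3v)$ is smooth (this is where $t\neq0$ enters) and passes through $(v,w)=(0,2t)$, so it has infinitely many rational points and thus infinitely many admissible $v$; discarding $v\in\{0,2t\}$ keeps $u_2u_3=-v(2t-v)\neq0$, so all three $u_i$ lie in $\QQ^\times$. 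Since $x_1=\frac{v^2-1}{2v}$ then takes infinitely many values, this yields infinitely many solutions. This part is entirely elementary.

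For the product, the key reduction is that, $\mathcal{S}'$ being closed under inversion, a triple with $x_1x_2x_3=t$ and $x_i\in\mathcal{S}'$ is the same as a pair $x_1,x_2\in\mathcal{S}'\setminus\{0\}$ with $\frac{x_1x_2}{t}\in\mathcal{S}'$ (take $x_3=t/(x_1x_2)$ and note $x_3\in\mathcal{S}'\iff x_3^{-1}=\frac{x_1x_2}{t}\in\mathcal{S}'$). Fixing $x_1=\xi$ for a suitable $\xi\in\mathcal{S}'\setminus\{0,\pm t\}$, we then need $\alpha:=x_2\in\mathcal{S}'$ with $\alpha^2+1$ and $\xi^2\alpha^2+t^2$ both squares — which is precisely the condition that the rational point corresponding to $(u:v)=(\alpha:1)$ lie on $H_\eta$ (\cref{eq:Heta}) for the diagonal matrix $\eta=\begin{psmallmatrix}\xi&0\\0&t\end{psmallmatrix}$. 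This $H_\eta$ always contains the rational point from $\alpha=0$, and $\eta\eta^t$ is not scalar since $\xi\neq\pm t$, so \cref{prop:isomorphicelliptic} identifies $H_\eta$ with some $E_{r,s}$, whose parameters \cref{lem:solubletriangular} expresses explicitly in terms of $\xi$ and $t$. It remains to show $E_{r,s}(\QQ)$ is infinite for at least one admissible $\xi$: when the distinguished point $(-1,r)$ is non-torsion this is \cref{thm:Qinfpoints}, and when it is torsion one appeals instead to the supplementary positive-rank criteria established in \cref{sec:sumprod}; as these criteria are (finitely many) non-vanishing conditions on the rational functions $r(\xi),s(\xi)$, all but finitely many $\xi\in\mathcal{S}'$ will do. Each admissible $\xi$ then yields infinitely many $\alpha\neq0$ in $E_{r,s}(\QQ)$, hence infinitely many triples $(\xi,\alpha,t/(\xi\alpha))$.

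The main obstacle is the product: one must guarantee positive rank of the relevant $E_{r,s}$ for \emph{every} $t\in\QQ$, and a priori the natural rational points on it could all be torsion — exactly the situation the rank results of \cref{sec:sumprod} are built to cover, with the freedom to vary $\xi$ throughout $\mathcal{S}'$ absorbing the finitely many degenerate specializations at each $t$. The sum, by contrast, uses no input about ranks of elliptic curves at all: the only device is that setting $\sigma_2=0$ collapses the problem to a conic.
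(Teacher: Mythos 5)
Your treatment of the \emph{sum} is correct and takes a genuinely different, more elementary route than the paper. The paper writes $t=\alpha_1+\alpha_2+\alpha_2$ and invokes \cref{thm:Qinfpoints} for the curve attached to $\begin{psmallmatrix}1&-t/2\\0&-1/2\end{psmallmatrix}$, so its infinitude of solutions rests on positive rank of an elliptic curve; your observation that writing $x_i=\frac{u_i^2-1}{2u_i}$ and imposing $\sigma_2=0$ collapses the problem to the smooth conic $w^2=(2t-v)(2t+3v)$ through $(0,2t)$ avoids elliptic curves entirely, and even produces triples with three distinct entries. That half of your argument is complete and checks out.

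The \emph{product} half has a genuine gap. Your reduction --- fix $\xi\in\mathcal{S}'\setminus\{0,\pm t\}$ and seek $\alpha$ with $\alpha^2+1$ and $\xi^2\alpha^2+t^2$ both squares, i.e.\ a rational point on $H_\eta$ for the diagonal matrix $\eta=\begin{psmallmatrix}\xi&0\\0&t\end{psmallmatrix}$ --- is sound and is essentially the paper's setup. But the curve you land on is $E_{0,s}$: applying \cref{lem:solubletriangular} to the base point $(1:2t:1)$ gives $r=0$ \emph{identically in $\xi$} (and $s=\xi/t$), because $ab+cd=0$ and $z_0^2-x_0^2=0$. Hence the distinguished point $(-1,r)=(-1,0)$ is $2$-torsion for \emph{every} admissible $\xi$, \cref{thm:Qinfpoints} never applies, and the obstruction is not a ``finitely many degenerate specializations'' phenomenon that varying $\xi$ can absorb. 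Nor are there independent ``supplementary positive-rank criteria established in \cref{sec:sumprod}'' to fall back on: the only positive-rank input there for curves of the form $E_{0,s}$ is the explicit construction inside the paper's own proof of \cref{prop:threesum}, so the appeal is circular. (This is exactly the body-cuboid situation, which \cref{ratdistprobs} pointedly leaves without an infinitude claim; by Halbeisen--Hungerb\"uhler the degenerate points form the full torsion subgroup, so one genuinely must prove positive rank.) What is missing is the actual content of the paper's argument: for each $t$ one must exhibit a \emph{specific} $\xi$ together with an explicit non-torsion point on the resulting curve. The paper does this by taking $u=t^2+2$, i.e.\ $\xi=\frac{1-u^2}{2u}$, writing down a rational point of infinite order on $y^2=x(x+1)\bigl(x+t^2\bigl(\tfrac{2u}{1-u^2}\bigr)^2\bigr)$ valid for $t\neq 0,\pm1$, and then handling $t=\pm1$ separately via the $(240,117,44)$ body cuboid. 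Without such an explicit specialization, your argument for the product does not close.
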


See \cref{sec:sumprod} for a proof.

\begin{comment}

For products, there is unfortunately no analogue of \cref{prop:pythslopesum}; some Pythagorean slopes can be written as products of two other Pythagorean slopes, and others can't. In another paper, the author presents a conjectural method for computing 
\[\mathcal{T}:=\{t\in\QQ:t=\alpha_1\alpha_2\text{ for infinitely many }\alpha_1,\alpha_2\in\mathcal{S}_\QQ\}\]
up to a set of density zero. 
\begin{prop}[{\cite[Corollary 1.4]{loveroot}}]\label{prop:twoprod}
	Assume the parity conjecture and the density conjecture. Given $t\in\QQ-\{0,\pm1\}$, let $\mathcal{P}_t$ denote the set consisting of odd primes $p$ with $\ord_p(t)\neq 0$, primes $p\equiv 1\mod 4$ with $\ord_p(t^2-1)>0$, and $2$ if and only if neither $\ord_2(t)=\pm2$ nor $\ord_2(t^2-1)=3$. The set $\mathcal{T}\subseteq\QQ$ has density $\frac12$, and there exists $S\subseteq \QQ$ of density zero such that
	\[\mathcal{T}=\{t\in\QQ-\{0,\pm1\}:|\mathcal{P}_t|\text{ even}\}\cup S.\]
\end{prop}

This is the closest we have to an answer to \cref{item:prod} from the original list of questions. However, we do have an analogue of \cref{prop:threesum}.
\end{comment}

\subsection{Rational configuration problems}\label{sec:ratconfigs}

Given a finite simple graph $G=(V,E)$, an embedding $\phi:V\hookrightarrow\RR^n$ is a \emph{rational configuration} if the distance $d(\phi(v),\phi(w))$ is rational for all $(v,w)\in E$. We may add some additional constraints to the set of allowable embeddings (for instance, we may require some pairs of edges to be the same length, or to meet at right angles), and in doing so we obtain a corresponding \emph{rational configuration problem}: to determine whether there exists a rational configuration satisfying the desired constraints, and if so, to classify or count the number of rational configurations. We describe a list of sample rational configurations below; the corresponding graphs can be found in \cref{ratdistprobs}.
\begin{itemize}
	\item ``Adjacent rectangles:'' Find two rectangles sharing an edge such that the distance between any two vertices is rational.
	\item ``Detour:'' Fix parameters $r,s,t\in\QQ^\times$. Find a point $x$ such that $(x,0)$ has rational distance to $(0,0)$, $(r,0)$, $(0,s)$, and $(r,t)$. (A traveller is going from $(0,s)$ to $(r,t)$, but has to take a detour to stop at the $x$-axis along the way; can they do so using only two straight paths of rational length?)
	\item ``Perfect cuboid:'' Find a rectangular prism such that the distance between any two vertices is rational.
	\item ``Body cuboid:'' Find a rectangular prism such that the distance between any two vertices that share a face are rational.
	\item ``Square four-distance:'' Find a point $(x,y)\in\mathbb{R}^2$ such that the distance to each of $(0,0)$, $(1,0)$, $(0,1)$, and $(1,1)$ is rational.
	\item ``Square three-distance:'' Find a point $(x,y)\in\mathbb{R}^2$ such that the distance to each of $(0,0)$, $(0,1)$, and $(1,1)$ is rational.
	\item ``Rectangle four-distance:'' Find $r\in\mathbb{Q}^\times$ and a point $(x,y)\in\mathbb{R}^2$ such that the distance to each of $(0,0)$, $(0,1)$, $(r,0)$, and $(r,1)$ is rational.
	\item ``Rational distances under M\"{o}bius transformation:'' Fix $a,b,c,d\in\QQ$ with $ad-bc\neq 0$. Find $z\in\CC$ such that $z$ and $\frac{az+b}{cz+d}$ both have rational distance from $0$.
\end{itemize}

\begin{table}
	{\footnotesize
		\begin{tabularx}{\textwidth}{>{\raggedright}X>{\raggedright}X>{\raggedright}X>{\raggedright\arraybackslash}X}
			Configuration & Graph & Equation & Solutions given by \\
			\hline
			Adjacent rectangles &  
			\begin{tikzpicture}
				\filldraw[black] (0,0) node (v3) {} circle (1.5pt);
				\filldraw[black] (0.7,0) node (v4) {} circle (1.5pt);
				\filldraw[black] (1.8,0) node (v5) {} circle (1.5pt);
				\filldraw[black] (0,0.9) node (v1) {} circle (1.5pt);
				\filldraw[black] (0.7,0.9) node (v2) {} circle (1.5pt);
				\filldraw[black] (1.8,0.9) node (v6) {} circle (1.5pt);
				\node at (0.35,-0.2) {$\alpha_1$};
				\node at (1.25,-0.2) {$\alpha_2$};
				\node at (-0.2,0.45) {$1$};
				\draw  (v1) edge (v2);
				\draw  (v1) edge (v3);
				\draw  (v3) edge (v4);
				\draw  (v4) edge (v5);
				\draw  (v5) edge (v6);
				\draw  (v6) edge (v2);
				\draw  (v2) edge (v4);
				\draw  (v1) edge (v4);
				\draw  (v2) edge (v4);
				\draw  (v2) edge (v5);
				\draw  (v3) edge (v6);
				\node (v7) at (0,1.1) {};
				\node (v8) at (1.8,1.1) {};
			\end{tikzpicture} & $\alpha_1+\alpha_2=\alpha_3$ & $E_{\alpha_3,1}(\QQ)$ \newline ($\infty$ for all $\alpha_3\in\mathcal{S}$: \newline \cref{prop:pythslopesum})\\
			Detour ($r,s\in\QQ^\times$) &  
			\begin{tikzpicture}
				\filldraw[black] (0,0) node (v3) {} circle (1.5pt);
				\filldraw[black] (0.9,0) node (v4) {} circle (1.5pt);
				\filldraw[black] (2,0) node (v5) {} circle (1.5pt);
				\filldraw[black] (0,0.75) node (v1) {} circle (1.5pt);
				\filldraw[black]  node (v2) {} circle (1.5pt);
				\filldraw[black] (2,1) node (v6) {} circle (1.5pt);
				\node at (0.45,-0.2) {$s\alpha_1$};
				\node at (1.5,-0.2) {$\alpha_2$};
				\node (v9) at (-0.2,0.45) {$s$};
				\node (v9) at (2.25,0.5) {$1$};
				\node (v9) at (1,-0.55) {$r$};

				\draw[dashed]  (0,-0.35) edge (2,-0.35);
				\draw  (v1) edge (v4);
				\draw  (v4) edge (v6);
				\draw  (v1) edge (v3);
				\draw  (v3) edge (v4);
				\draw  (v4) edge (v5);
				\draw  (v5) edge (v6);
			\end{tikzpicture} & $s\alpha_1+\alpha_2=r$ & $E_{r,s}(\QQ)$ \newline ($\infty$ if $|s|\neq 1$ and $4r^2s\neq \pm(1-s^2)^2$: \newline \cref{thm:Qinfpoints})\\
			Perfect cuboid &  \begin{tikzpicture}
				\filldraw[black] (0,0) node (v3) {} circle (1.5pt);
				\filldraw[black] (1.5,0) node (v4) {} circle (1.5pt);
				\filldraw[black] (2,1.3) node (v5) {} circle (1.5pt);
				\filldraw[black] (0,0.8) node (v1) {} circle (1.5pt);
				\filldraw[black] (1.5,0.8) node (v2) {} circle (1.5pt);
				\filldraw[black] (0.5,1.3) node (v6) {} circle (1.5pt);
				\node at (0.75,-0.2) {$\alpha_1$};
				\node at (1.9,0.1) {$\alpha_2$};
				\node at (-0.2,0.5) {$1$};
				\filldraw[black] (0.5,0.5) node (v7) {} circle (1.5pt);
				\filldraw[black] (2,0.5) node (v8) {} circle (1.5pt);
				\draw  (v1) edge (v2);
				\draw  (v2) edge (v4);
				\draw  (v5) edge (v2);
				\draw  (v4) edge (v3);
				\draw  (v4) edge (v8);
				\draw  (v8) edge (v5);
				\draw  (v5) edge (v6);
				\draw  (v6) edge (v7);
				\draw  (v7) edge (v3);
				\draw  (v3) edge (v1);
				\draw  (v1) edge (v6);
				\draw  (v7) edge (v8);
				\draw  (v2) edge (v6);
				\draw  (v5) edge (v4);
				\draw  (v2) edge (v3);
				\draw  (v4) edge (v6);
			\end{tikzpicture} & $\alpha_1^2+\alpha_2^2=\alpha_3^2$ & Unknown \\
			Body cuboid & \begin{tikzpicture}
				\filldraw[black] (0,0) node (v3) {} circle (1.5pt);
				\filldraw[black] (1.5,0) node (v4) {} circle (1.5pt);
				\filldraw[black] (2,1.3) node (v5) {} circle (1.5pt);
				\filldraw[black] (0,0.8) node (v1) {} circle (1.5pt);
				\filldraw[black] (1.5,0.8) node (v2) {} circle (1.5pt);
				\filldraw[black] (0.5,1.3) node (v6) {} circle (1.5pt);
				\node at (0.75,-0.2) {$\alpha_1$};
				\node at (1.9,0.1) {$\alpha_2$};
				\node at (-0.2,0.5) {$1$};
				\filldraw[black] (0.5,0.5) node (v7) {} circle (1.5pt);
				\filldraw[black] (2,0.5) node (v8) {} circle (1.5pt);
				\draw  (v1) edge (v2);
				\draw  (v2) edge (v4);
				\draw  (v5) edge (v2);
				\draw  (v4) edge (v3);
				\draw  (v4) edge (v8);
				\draw  (v8) edge (v5);
				\draw  (v5) edge (v6);
				\draw  (v6) edge (v7);
				\draw  (v7) edge (v3);
				\draw  (v3) edge (v1);
				\draw  (v1) edge (v6);
				\draw  (v7) edge (v8);
				\draw  (v2) edge (v6);
				\draw  (v5) edge (v4);
				\draw  (v2) edge (v3);
			\end{tikzpicture} & $\alpha_1\alpha_3=\alpha_2$ & $E_{0,\alpha_3}(\QQ)$ \\
			(Square) Four-distance &  \begin{tikzpicture}
				\filldraw[black] (0,0) node (v3) {} circle (1.5pt);
				\filldraw[black] (0.45,0.95) node (v4) {} circle (1.5pt);
				\filldraw[black] (1.5,1.5) node (v5) {} circle (1.5pt);
				\filldraw[black] (0,1.5) node (v1) {} circle (1.5pt);
				\filldraw[black] (1.5,0) node (v2) {} circle (1.5pt);
				\filldraw[black] (0,0.95) node (v9) {} circle (1.5pt);
				\filldraw[black] (1.5,0.95) node (v6) {} circle (1.5pt);
				\filldraw[black] (0.45,0) node (v7) {} circle (1.5pt);
				\filldraw[black] (0.45,1.5) node (v8) {} circle (1.5pt);
				
				\draw  (v1) edge (v9);
				\draw  (v9) edge (v3);
				\draw  (v3) edge (v7);
				\draw  (v7) edge (v2);
				\draw  (v2) edge (v6);
				\draw  (v6) edge (v5);
				\draw  (v5) edge (v8);
				\draw  (v8) edge (v1);
				\draw  (v9) edge (v4);
				\draw  (v4) edge (v6);
				\draw  (v8) edge (v4);
				\draw  (v4) edge (v7);
				\draw  (v4) edge (v3);
				\draw  (v4) edge (v1);
				\draw  (v4) edge (v5);
				\draw  (v4) edge (v2);
				\node at (0.25,1.65) {$1$};
				\node at (1,1.65) {$\alpha\alpha_2$};
				\node at (-0.2,0.5) {$\alpha_3$};
				\node at (-0.2,1.25) {$\alpha_1$};
			\end{tikzpicture}  & $\alpha_1\alpha_2=\alpha_3\alpha_4=\alpha_1+\alpha_3-1$ & Unknown \\
			(Square) three-distance & \begin{tikzpicture}
				\filldraw[black] (0,0) node (v3) {} circle (1.5pt);
				\filldraw[black] (0.45,0.95) node (v4) {} circle (1.5pt);
				\filldraw[black] (1.5,1.5) node (v5) {} circle (1.5pt);
				\filldraw[black] (0,1.5) node (v1) {} circle (1.5pt);
				\filldraw[black] (1.5,0) node (v2) {} circle (1.5pt);
				\filldraw[black] (0,0.95) node (v9) {} circle (1.5pt);
				\filldraw[black] (1.5,0.95) node (v6) {} circle (1.5pt);
				\filldraw[black] (0.45,0) node (v7) {} circle (1.5pt);
				\filldraw[black] (0.45,1.5) node (v8) {} circle (1.5pt);
				
				\draw  (v1) edge (v9);
				\draw  (v9) edge (v3);
				\draw  (v3) edge (v7);
				\draw  (v7) edge (v2);
				\draw  (v2) edge (v6);
				\draw  (v6) edge (v5);
				\draw  (v5) edge (v8);
				\draw  (v8) edge (v1);
				\draw  (v9) edge (v4);
				\draw  (v4) edge (v6);
				\draw  (v8) edge (v4);
				\draw  (v4) edge (v7);
				\draw  (v4) edge (v3);
				\draw  (v4) edge (v1);
				\draw  (v4) edge (v5);
				\node at (0.25,1.65) {$1$};
				\node at (1,1.65) {$\alpha\alpha_2$};
				\node at (-0.2,0.5) {$\alpha_3$};
				\node at (-0.2,1.25) {$\alpha_1$};
			\end{tikzpicture} & $\alpha_1\alpha_2=\alpha_1+\alpha_3-1$ & $E_{-1,1-\alpha_3}(\QQ)$ \newline ($\infty$ for all $\alpha_3\in\mathcal{S}$: \newline \cref{thm:squaredist}) \\
			(Rectangle) four-distance & \begin{tikzpicture}
				\filldraw[black] (0,0) node (v3) {} circle (1.5pt);
				\filldraw[black] (0.45,0.7) node (v4) {} circle (1.5pt);
				\filldraw[black] (1.5,1.2) node (v5) {} circle (1.5pt);
				\filldraw[black] (0,1.2) node (v1) {} circle (1.5pt);
				\filldraw[black] (1.5,0) node (v2) {} circle (1.5pt);
				\filldraw[black] (0,0.7) node (v9) {} circle (1.5pt);
				\filldraw[black] (1.5,0.7) node (v6) {} circle (1.5pt);
				\filldraw[black] (0.45,0) node (v7) {} circle (1.5pt);
				\filldraw[black] (0.45,1.2) node (v8) {} circle (1.5pt);
				
				\draw  (v1) edge (v9);
				\draw  (v9) edge (v3);
				\draw  (v3) edge (v7);
				\draw  (v7) edge (v2);
				\draw  (v2) edge (v6);
				\draw  (v6) edge (v5);
				\draw  (v5) edge (v8);
				\draw  (v8) edge (v1);
				\draw  (v9) edge (v4);
				\draw  (v4) edge (v6);
				\draw  (v8) edge (v4);
				\draw  (v4) edge (v7);
				\draw  (v4) edge (v3);
				\draw  (v4) edge (v1);
				\draw  (v4) edge (v2);
				\draw  (v4) edge (v5);
				\node at (0.25,1.35) {$1$};
				\node at (1,1.35) {$\alpha_1\alpha_2$};
				\node at (-0.2,0.35) {$\alpha_1$};
				\node at (-0.2,0.95) {$\alpha_3$};
			\end{tikzpicture} & $\alpha_1\alpha_2=\alpha_3\alpha_4$ & $E_{0,\alpha_3\alpha_4}$ \\
			Rational distances under M\"{o}bius transformation, $\eta=\begin{psmallmatrix}
				a&b\\c&d
			\end{psmallmatrix}\in \GL_2(\QQ)$ &  \begin{tikzpicture}
				\filldraw[black] (0,0) node (v3) {} circle (1.5pt);
				\filldraw[black] (1.5,0) node (v4) {} circle (1.5pt);
				\filldraw[black] (0.5,1.25) node (v5) {} circle (1.5pt);
				\filldraw[black] (1.5,0.8) node (v2) {} circle (1.5pt);
				\filldraw[black] (0,1.25) node (v6) {} circle (1.5pt);
				
				\draw  (v3) edge (v4);
				\draw  (v4) edge (v2);
				\draw  (v2) edge (v3);
				\draw  (v3) edge (v6);
				\draw  (v6) edge (v5);
				\draw  (v5) edge (v3);
				\node at (-0.55,0.7) {$c\alpha_1+d$};
				\node at (0.25,1.5) {$a\alpha_1+b$};
				\node at (0.75,-0.2) {$\alpha_1$};
				\node at (1.65,0.4) {$1$};
			\end{tikzpicture} & $(a\alpha_1+b)\alpha_2=(c\alpha_1+d)$ & $H_{\eta}(\QQ)$
	\end{tabularx}}
	\caption{Diagrams of rational distance problems. Rational configurations are given by solutions to the given equation with $\alpha_i\in\mathcal{S}$ for all $i$. By \cref{prop:param}, these configurations are parametrized by rational points on a curve $H_\eta$. If a distinguished rational point on $H_\eta$ is known, then the isomorphic elliptic curve $E_{r,s}$ is given (\cref{prop:isomorphicelliptic}), and labeled with $\infty$ in the cases that $E_{r,s}(\QQ)$ is known to be infinite.} \label{ratdistprobs}
\end{table}

The perfect cuboid problem and square four-distance problem are classic unsolved problems (see \cref{sec:relatedprobs}); this paper does not present a solution to either of them. However, we can put all the remaining problems in this list into a common framework. Define
\begin{align*}
	\mathcal{S}&=\{(u:v)\in\mathbb{P}^1(\QQ)\mid\sqrt{u^2+v^2}\in\QQ\},
\end{align*}
so that whenever $u,v$, not both zero, are the legs of a (possibly degenerate) rational right triangle, the slope of the triangle is in $\mathcal{S}$. Then for distinct $P_1,P_2\in\mathbb{Q}^2$, the distance between $P_1$ and $P_2$ is rational if and only if the line between $P_1$ and $P_2$ has slope in $\mathcal{S}$. Using this observation, we can parametrize solutions to many rational configuration problems by finding elements of $\mathcal{S}$ satisfying simple polynomial relations.
\begin{ex}
	Given a hypothetical solution to the perfect cuboid problem, we can scale the solution so that one edge length has length $1$; this implies there exist $\alpha_1,\alpha_2\in\Q\setminus\{0\}$ such that 
	\[1+\alpha_1^2, \quad 1+\alpha_2^2,\quad \alpha_1^2+\alpha_2^2, \quad\text{and}\quad 1+\alpha_1^2+\alpha_2^2\]
	are all perfect squares. If we set $\alpha_3=\sqrt{\alpha_1^2+\alpha_2^2}$, then the polynomial constraints above are equivalent to requiring
	\[\alpha_1^2+\alpha_2^2=\alpha_3^2\qquad\text{for some }\alpha_1,\alpha_2,\alpha_3\in\mathcal{S}\setminus\{(1:0),(0:1)\}.\]
\end{ex}
Similar polynomial constraints for each of the problems above are listed in \cref{ratdistprobs}. Note that for every problem in \cref{ratdistprobs} besides the perfect cuboid problem and the square four-distance problem, rational configurations correspond to solutions in $\mathcal{S}$ to a single polynomial in multiple variables that is linear in each variable.
\begin{prop}\label{prop:param}
	Let $\eta=\begin{psmallmatrix}
		a&b\\c&d
	\end{psmallmatrix}\in\GL_2(\QQ)$, and let $F_\eta$ be the curve in $\mathbb{P}^1\times \mathbb{P}^1$ defined by
	\[F_\eta:ax_1x_2+bx_1z_2+cz_1x_2+dz_1z_2=0.\]
	There is a degree $4$ morphism $\Phi:H_\eta\to F_\eta$ inducing a surjection 
	\[H_\eta(\QQ)\to F_\eta(\QQ)\cap (\mathcal{S}\times\mathcal{S}).\]
\end{prop}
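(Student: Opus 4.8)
The plan is to write $\Phi$ down explicitly and recognize it as the composition of two maps of degree $2$. For $(x:y:z)\in H_\eta$ put $U=z^2-x^2$ and $V=2xz$, and set
\[\Phi(x:y:z)=\bigl((cU+dV:-(aU+bV)),\ (U:V)\bigr)\in\PP^1\times\PP^1,\]
which does not involve $y$. I would first check that this is a morphism landing on $F_\eta$: substituting $(x_1,z_1)=(cU+dV,-(aU+bV))$ and $(x_2,z_2)=(U,V)$ into $ax_1x_2+bx_1z_2+cz_1x_2+dz_1z_2$ gives $0$, while $U,V$ are never simultaneously zero on $\PP^1$ and $aU+bV,cU+dV$ are never simultaneously zero because $\eta$ is invertible and $(U,V)\ne 0$.

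Next I would check that on rational points $\Phi$ lands in $\mathcal{S}\times\mathcal{S}$. In the second coordinate, $U^2+V^2=(z^2-x^2)^2+(2xz)^2=(x^2+z^2)^2$ is always a square, so $(U:V)\in\mathcal{S}$. In the first coordinate, $(cU+dV)^2+(aU+bV)^2$ is exactly the right-hand side of the equation defining $H_\eta$, hence equals $y^2$; since "$u^2+v^2$ is a square" is invariant under rational scaling of $(u,v)$, this gives $(cU+dV:-(aU+bV))\in\mathcal{S}$. For the degree, note that projection to the second factor restricts to an isomorphism $F_\eta\xrightarrow{\ \sim\ }\PP^1$ (its inverse sends $(x_2:z_2)$ to $\bigl((cx_2+dz_2:-(ax_2+bz_2)),(x_2:z_2)\bigr)$, well defined since $\eta$ is invertible); under this identification $\Phi$ becomes $(x:y:z)\mapsto(x:z)\mapsto(z^2-x^2:2xz)$, the degree-$2$ projection of the double cover $y^2=(\text{binary quartic})$ followed by the degree-$2$ map $t\mapsto\frac{1-t^2}{2t}$ in the coordinate $t=x/z$, so $\deg\Phi=4$. (When $\eta\eta^t$ is a scalar matrix the quartic is a perfect square and $H_\eta$ is a union of two conics by \cref{prop:isomorphicelliptic}, and one reads the degree componentwise.)

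Finally, for surjectivity onto $F_\eta(\QQ)\cap(\mathcal{S}\times\mathcal{S})$, start from such a point $P=\bigl((x_1:z_1),(x_2:z_2)\bigr)$. Since $(x_2:z_2)\in\mathcal{S}$, the classical parametrization of Pythagorean triples yields $(x:z)\in\PP^1(\QQ)$ with $(z^2-x^2:2xz)=(x_2:z_2)$ (with the cases where one of $x_2,z_2$ vanishes checked directly). Putting $U=z^2-x^2$, $V=2xz$, the equation of $F_\eta$ and invertibility of $\eta$ force $(x_1:z_1)=(cU+dV:-(aU+bV))$, and because $(x_1:z_1)\in\mathcal{S}$ there is $y\in\QQ$ with $y^2=(aU+bV)^2+(cU+dV)^2$; then $(x:y:z)\in H_\eta(\QQ)$ and $\Phi(x:y:z)=P$. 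The step I expect to be the main (if mild) obstacle is the surjectivity of the Pythagorean parametrization $\PP^1(\QQ)\to\mathcal{S}$, $(x:z)\mapsto(z^2-x^2:2xz)$ — in particular that it attains the boundary points $(1:0)$ and $(0:1)$; everything else is direct verification, modulo bookkeeping about the weighted-projective scaling $(x,y,z)\sim(\lambda x,\lambda^2 y,\lambda z)$, under which the identity $y^2=(aU+bV)^2+(cU+dV)^2$ transforms correctly.
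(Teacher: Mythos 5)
Your proof is correct and takes essentially the same route as the paper's (\cref{prop:bijection}): the map $\Phi$ is identical (up to the choice of sign representative in the first coordinate), the verification that the image lies on $F_\eta$ and in $\mathcal{S}\times\mathcal{S}$ is the same, and the surjectivity argument rests on the same surjectivity of the Pythagorean parametrization $(x:z)\mapsto(z^2-x^2:2xz)$ onto $\mathcal{S}$, which the paper builds in as its alternate description of $\mathcal{S}_K$. The only cosmetic difference is in the degree count: the paper observes that the Klein four-group $\Gamma=\langle y\mapsto -y,\ (x:z)\mapsto(-z:x)\rangle$ acts transitively on the fibers of $\Phi$, while you factor $\Phi$ through the isomorphism $F_\eta\cong\PP^1$ as a composite of two degree-$2$ maps; these are equivalent observations.
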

This follows from \cref{prop:bijection}. \cref{prop:param} shows that for a wide collection of problems, rational configurations can be classified using rational points on curves of the form $H_\eta$. We can use this observation to show that some rational configuration problems have infinitely many rational configurations. In some cases, such as the detour problem and the square three-distance problem, the infinitude of solutions will be a consequence of \cref{thm:Qinfpoints}. For others, including the adjacent rectangles, body cuboid, and rectangle four-distance problems, the corresponding curve $E_{r,s}$ lands in one of the exceptional cases of \cref{thm:Qinfpoints}, and so we cannot immediately conclude that there are infinitely many solutions.

\subsection{Outline}

We begin with a discussion of some related problems and their histories in \cref{sec:relatedprobs}. In \cref{sec:setup} we analyze the algebraic structure of the family $H_\eta$, in particular showing that the isomorphism type of $H_\eta$ is invariant under a left- and right-action of the orthogonal group (\cref{sec:doubcos}). We then analyze the singular fibers (\cref{sec:singular}), followed by the non-singular fibers that contain a rational point (\cref{sec:nonsingular}), proving that most fibers of this type have infinitely many rational points (\cref{thm:Qinfpoints}). Completing our study of rational points on the fibers, \cref{sec:upperbound} contains a proof the set of fibers containing a rational point has low density (\cref{thm:upperlower}). Note that \cref{sec:upperbound} only requires \cref{sec:assumptions} and \cref{sec:Hdef} from \cref{sec:setup}.

We conclude with some applications of these results in \cref{sec:applications}, focusing primarily on the square three-distance problem.

\subsection{Acknowledgments}

The author was supported by a CRM-ISM Postdoctoral Fellowship during the writing of this article, and would like to thank Andrew Granville, Sun-Kai Leung, Michael Lipnowski, Henri Darmon, Eyal Goren, Allysa Lumley, Olivier Mila, Wanlin Li, and Valeriya Kovaleva for helpful discussions.

\section{Prior work on related problems}\label{sec:relatedprobs}

There are a number of open problems regarding rational configurations; in this section we will focus on two of them, namely the perfect cuboid problem in \cref{sec:cuboid} and the square four-distance problem in \cref{sec:squarevertex} (both of these are discussed at greater length in~\cite{guy}). In each case, we show that the problem is equivalent to the existence of a Pythagorean solution of a certain polynomial or system of polynomials. Finally, in \cref{sec:cong}, we compare to the congruent number problem.

\subsection{Perfect cuboid problem}\label{sec:cuboid}

While the perfect cuboid problem is open, significant progress has been made towards studying the ``body cuboid'' problem, which is to give a cuboid in which all edges and all face diagonals (but not necessarily the body diagonal) have rational lengths. If $1,\alpha_1,\alpha_2$ are the edge lengths of a body cuboid, then $\alpha_1^2+1$, $\alpha_2^2+1$, and $\alpha_1^2+\alpha_2^2$ are all perfect squares; the first two conditions say that $\alpha_1,\alpha_2\in\mathcal{S}$ and the third is equivalent to requiring $\frac{\alpha_2}{\alpha_1}\in\mathcal{S}$.

For each fixed $\alpha_3\in\mathcal{S}$, the values $\alpha_1,\alpha_2\in\mathcal{S}$ satisfying $\alpha_1\alpha_3=\alpha_2$ are parametrized by an elliptic curve (\cref{prop:param} and \cref{prop:isomorphicelliptic}). This association between body cuboids and a family of elliptic curves is well-studied; Luijk has an in-depth survey~\cite{luijk} that mentions this association as well as many other known results about perfect cuboids. Halbeisen and Hungerb\"uler~\cite{halbeisen} investigate this problem as well. Given a fixed $\alpha_3=\frac ba$, they associate solutions $\alpha_1,\alpha_2\in\mathcal{S}\setminus\{0\}$ satisfying $\alpha_1\alpha_3=\alpha_2$ to rational points on the elliptic curve
\begin{align}
	E:y^2=x^3+(a^2+b^2)x^2+a^2b^2x.
\end{align}
\cref{prop:param} and \cref{prop:isomorphicelliptic} recovers this classification. They show that there is a subgroup of $E(\QQ)$ isomorphic to $\ZZ/2\ZZ\times \ZZ/4\ZZ$ which give degenerate solutions to the corresponding rational distance problem. Ruling out other possible torsion points, they conclude~\cite[Theorem 8]{halbeisen} that nondegnerate solutions exist if and only if $E(\QQ)$ has positive rank. In this case they call $(a,b)$ a \emph{double-pythapotent pair}. 

\subsection{Four-distance problem}\label{sec:squarevertex}

As with the perfect cuboid problem, the four-distance problem is currently out of reach, but a slightly weaker variant has many known solutions. The three-distance problem is to find points $P=(x,y)\in\mathbb{R}^2$ with rational distance to $(0,0)$, $(0,1)$, and $(1,1)$. The coordinates $x,y$ are not a priori assumed to be rational, but since $x^2+y^2$, $x^2+(1-y)^2$, and $(1-x)^2+(1-y)^2$ must all be rational, the differences $2y-1$ and $2x-1$ must also be rational, so in fact $P\in\QQ^2$. We can then scale by an element of $\QQ^\times$ so that $x=1$, and a solution to the square three-distance problem is equivalent to the existence of $\alpha_1,\alpha_2,\alpha_3\in\mathcal{S}$ satsifying $1+\alpha_1\alpha_2=\alpha_1+\alpha_3$.

For many years it was believed that there were no solutions to the three-distance problem aside from points on the coordinate axes. The first one-parameter family of nontrivial solutions was found in 1967 by J.H. Hunter, and then many more infinite families were found in rapid succession; a historical overview is given by Berry, who also presents an ``extraordinary abundance'' of solutions lying in infinitely many one-parameter families~\cite{berry}. We observe that the families of solutions obtained in \cref{thm:squaredist} are distinct from those that appear in \cite[Table 4]{berry}, though it is unclear whether any (or all) of the one-parameter families we consider are eventually accounted for by Berry's construction.

\subsection{Congruent number problem}\label{sec:cong}

A rational number $n\in\QQ$ is a \emph{congruent number} if it is the area of a right triangle with rational edge lengths; that is, if there is a solution to
\begin{align}\label{eq:cong}
	a^2+b^2=c^2\qquad\text{and}\qquad\frac12 ab=n,\qquad a,b,c\in\QQ^\times.
\end{align}
The ``congruent number problem'' is to determine whether a given $n\in\QQ$ is a congruent number.
This problem is not a rational configuration problem, but the underlying methods used to study these two problems are similar enough that a comparison is worthwhile.

There is a well-known approach to studying the congruent number problem; see for example the expositions~\cite{kconrad} and~\cite{chandrasekar}. For fixed $n$, any solution to \cref{eq:cong} corresponds to a rational point on an elliptic curve over $\QQ$ defined by
\begin{align}\label{eq:congcurve}
E^{(n)}:y^2=x^3-n^2x.
\end{align}
There are ``degenerate points'' in $E^{(n)}(\QQ)$ that do not correspond to solutions; it can be shown that the set of degenerate points equals the torsion subgroup of $E^{(n)}(\QQ)$. Thus $n$ is a congruent number if and only if $E^{(n)}(\QQ)$ has positive rank. A formula due to Tunnell can be used to determine whether the analytic rank of $E^{(n)}$ is zero or positive~\cite{tunnell}, so by assuming the Birch and Swinnerton-Dyer conjecture, this gives a criterion that determines whether a given number is congruent.

Many aspects of this paper are modeled off of the approach described for studying the congruent number problem. To put the two problems on a common footing, note that $n$ is a congruent number if and only if $x_1=a^2$ and $x_2=\frac ba$ give a solution to
\begin{align}\label{eq:congnumeq}
	x_1x_2-2n=0,\qquad x_1\in(\QQ^\times)^2,\;x_2\in\mathcal{S}.
\end{align}
Both $\mathcal{S}$ and $(\QQ^\times)^2$ can be represented as the image of $\bbA^1(\QQ)$ under the image of a degree $2$ rational map $\bbA^1\to\bbA^1$. The curve $E^{(n)}$ comes equipped with a degree $4$ rational map to the variety defined by $x_1x_2-2n=0$, and non-degenerate points in $E^{(n)}(\QQ)$ map to solutions to \cref{eq:congnumeq}. This is directly analogous to the relation between $H_\eta(\QQ)$ and solutions to rational distance problems (\cref{prop:param}).

However, it is worth highlighting a few key differences between the congruent number problem and the family of rational distance problems we consider.
\begin{itemize}
	\item \textbf{Size of parameter space.} The isomorphism class of $E^{(n)}$ is determined by the class of $n$ in $\QQ^\times/(\QQ^\times)^2$, while the isomorphism class of $H_\eta$ is determined by the class of a corresponding matrix in a double quotient of $\GL_2(\QQ)$.
	\item \textbf{Existence of rational points.} Every $n$ determines an elliptic curve $E^{(n)}$, which has a rational point. By contrast, the genus one curves $H_\eta$ typically have no rational points (\cref{thm:upperlower}).
	\item \textbf{Closure under addition of degenerate points.} In both problems, the corresponding genus one curve has a set of ``degenerate'' rational points, which do not yield valid solutions to the original problem. For the congruent number problem, the set of degenerate points equals the torsion subgroup of $E^{(n)}(\QQ)$. For rational configuration problems, however, even if $\mathcal{H}_\eta$ is isomorphic to an elliptic curve (\cref{prop:isomorphicelliptic}), the degenerate points in ${H}_\eta(\QQ)$ may not form a subgroup. This is to our advantage: we can often add together degenerate points to produce non-degenerate points, something that is not possible in the congruent number problem. This is the key idea behind \cref{thm:Qinfpoints}.
	\item \textbf{Geometric variation in the family.} The curves $E^{(n)}$ are quadratic twists of the curve $y^2=x^3-x$, and are therefore all isomorphic over $\overline{\QQ}$. This fact is used in a key way in the proof of Tunnell's theorem, as he applies a result due to Waldspurger~\cite{waldspurger} relating the central value of the $L$-function of an elliptic curve with that of each of its quadratic twists. By contrast, the curves $H_\eta$ do not have constant $j$-invariant. This means that Tunnell's approach to computing the analytic rank does not apply to this family.
\end{itemize}

\section{The structure of the family}\label{sec:setup}

\subsection{Assumptions and notation}\label{sec:assumptions}

Let $K$ be a field of characteristic not equal to $2$, in which $-1$ is not a square; later we will restrict to $K=\QQ$, but many of our results hold in more generality. Throughout this paper, all schemes will be defined over $K$ unless otherwise indicated, and if $X$ and $Y$ are schemes then $X\times Y:=X \times_K Y$. 

Throughout, $\mathbb{P}^1$ will denote the projective line over $K$, while $\mathbb{P}^2$ will denote a \emph{weighted} projective space over $K$, where the variables $x,y,z$ have weights $1,2,1$, respectively. We use the notation $(x:z)$ and $(x:y:z)$ to denote elements of $\mathbb{P}^1(K)$ and $\mathbb{P}^2(K)$, respectively. That is, for $(x,z)\in K^2\setminus\{(0,0)\}$  we have
\[(x:z)=\{(\lambda x:\lambda z)\mid \lambda\in K^\times\},\]
and for $(x,y,z)\in K^3\setminus\{(0,0,0)\}$ we have
\[(x:y:z)=\{(\lambda x,\lambda^2 y:\lambda z)\mid \lambda\in K^\times\}.\]

Let $\GL_2=\Spec K[a,b,c,d,(ad-bc)^{-1}]$ denote the algebraic group of $2\times 2$ invertible matrices, with identity element $I$. Given a matrix $\eta\in \GL_2(K)$, its transpose will be denoted $\eta^t$. Let $\GO_2$ denote the orthogonal group of $2\times 2$ matrices, that is, the algebraic subgroup of $\base$ defined by the condition that $M\in\base(\overline{K})$ is in $\GO_2(\overline{K})$ if and only if $MM^t=M^tM=I$. 

\subsection{Definition of $\mathcal{H}$ and basic properties}\label{sec:Hdef}
Using the coordinates $\left((x:y:z),\begin{psmallmatrix}
	a&b\\c&d
\end{psmallmatrix}\right)$ on $\PP^2\times \GL_2$, define the variety $\mathcal{H}$ by the equation
\begin{align}\label{eq:Hdef}
	\mathcal{H}:y^2&=(a(z^2-x^2)+b(2xz))^2+(c(z^2-x^2)+d(2xz))^2.
\end{align}
If we let $N:K^2\to K$ be defined by $N(u,v)=u^2+v^2$, then this can equivalently be written 
\begin{align}
	\mathcal{H}:y^2&=N\left(\begin{pmatrix}
		a&b\\c&d
	\end{pmatrix}\begin{pmatrix}
		z^2-x^2\\2xz
	\end{pmatrix}\right).
\end{align}
This variety comes equipped with a morphism $\pi:\mathcal{H}\to\GL_2$, which equips $\mathcal{H}$ with the structure of a flat family of curves. Given $\eta\in \GL_2(K)$, ${H}_\eta$ is the fiber of $\pi$ over $\eta$. 

The generic fiber of $\pi$ is a genus one hyperelliptic curve over the function field $K(a,b,c,d)$, with discriminant
\begin{align}\label{eq:Hdisc}
	\Delta(\mathcal{H})=2^{16} (ad-bc)^4 ((a+d)^2+(b-c)^2)((a-d)^2+(b+c)^2).
\end{align}
The Jacobian variety of this curve is an elliptic curve over $K(a,b,c,d)$, which by classical invariant theory (see for example~\cite{weil,jacgenusone}) has a model
\begin{comment}
	If we write the equation for $\mathcal{H}$ as
	\begin{align*}
		y^2&=a_0x^4+4a_1x^3z+6a_2a_2x^2z^2+4a_3xz^3+a_4z^4
		\intertext{and define the invariants}
		i&=a_0a_4-4a_1a_3+3a_2^2,\\
		j&=a_0a_2a_4+2a_1a_2a_3-a_0a_3^2-a_4a_1^2-a_2^3,
	\end{align*}
	then the equation $y'^2=4x'^3-ix'-j$ over $K(a,b,c,d)$ defines a Weierstrass model for the Jacobian of $\mathcal{H}$~\cite{jacgenusone}. The substitution $(x',y')=(x+\frac13(a^2+b^2+c^2+d^2),\,2y)$ results in a model for the Jacobian defined over $K(a,b,c,d)$ by
\end{comment}
\begin{align}\label{eq:jacobian}
	E:y^2=x^3+(a^2+b^2+c^2+d^2)x^2+(ad-bc)^2x.
\end{align}

We have two commuting involutions on $\mathcal{H}$ as a scheme over $\GL_2$, given by 
\begin{align}\label{eq:sigma}
	\sigma_1:(x:y:z)\mapsto (x:-y:z)\qquad\text{and}\qquad \sigma_2:(x:y:z)\mapsto (-z:y:x),
\end{align}
generating a Klein four-group 
\begin{align}\label{eq:Gdef}
	\Gamma:=\langle \sigma_1,\sigma_2\rangle
\end{align}
acting on $\mathcal{H}$. (Note that $\sigma_2$ is an involution because $y$ has weight $2$, and so $(-x:y:-z)=(x:y:z)$.)

\subsection{Double cosets and reduction}\label{sec:doubcos}

We show that the isomorphism class of $\eta\in\GL_2(K)$ is invariant on double cosets in
\[\GO_2(K)\backslash \GL_2(K)/(K^\times\cdot \GO_2(K)),\]
and use this to show that $\mathcal{H}_\eta$ has a $K$-point if and only if $\eta$ is in the same double coset as $\begin{psmallmatrix}
	1&r\\0&s
\end{psmallmatrix}$ for some $r,s\in K$.

\begin{lem}\label{prop:doublecosiso}
	Let $\eta,\eta'\in\base(K)$. If $\eta'\in K^\times\GO_2(K)\eta\GO_2(K)$, then there is an isomorphism $\tau:\mathcal{H}_\eta\to\mathcal{H}_{\eta'}$ over $K$ that commutes with the action of $\Gamma$.
\end{lem}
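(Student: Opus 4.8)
The plan is to reduce the general double-coset statement to two special cases — left/right multiplication by an orthogonal matrix, and scaling by $\lambda\in K^\times$ — and construct an explicit $\Gamma$-equivariant isomorphism in each case. Since $\GO_2(K)$ and $K^\times$ generate the group acting, and $\eta'\in K^\times\GO_2(K)\eta\GO_2(K)$ means $\eta' = \lambda M_1\eta M_2$ for some $\lambda\in K^\times$ and $M_1,M_2\in\GO_2(K)$, it suffices to handle: (i) $\eta'=M\eta$; (ii) $\eta'=\eta M$; and (iii) $\eta'=\lambda\eta$. Composing the resulting isomorphisms gives the general case, provided each one commutes with $\Gamma$ (a property preserved under composition).

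For case (i), write $\mathcal{H}_\eta: y^2 = N(\eta v(x,z))$ where $v(x,z)=\begin{psmallmatrix} z^2-x^2\\ 2xz\end{psmallmatrix}$ and $N(u,w)=u^2+w^2$. The key observation is that $N$ is the norm form of the quadratic extension $K(\sqrt{-1})/K$, and $\GO_2(K)$ is exactly its isometry group: $N(Mw)=N(w)$ for all $M\in\GO_2(K)$, $w\in K^2$. Hence $N(M\eta v) = N(\eta v)$, so the \emph{identity} map on $(x:y:z)$ carries $\mathcal{H}_\eta$ isomorphically to $\mathcal{H}_{M\eta}$; it trivially commutes with $\Gamma$, which acts only on the source $\PP^2$.

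Case (ii) is the substantive one and I expect it to be the main obstacle. Here I need that the two families of conics $z^2-x^2$ and $2xz$ — i.e.\ the components of $v(x,z)$ — transform under $\GO_2(K)$ in the variable $(x:z)$ in a way that matches multiplying $\eta$ on the right by $M^t$ (or $M$). Concretely, $v$ is (up to the map $(x:z)\mapsto$ a point of a conic) essentially the composition $\PP^1\to\PP^1$, $(x:z)\mapsto(z^2-x^2:2xz)$, which is the $2$-to-$1$ "doubling" map identifying $\mathbb{G}_m$-like structure; an element $M\in\GO_2(K)$ should lift to an automorphism $\phi_M$ of the source $\PP^1$ (in the $(x:z)$-coordinates) with $v\circ\phi_M = M'\cdot v$ for the appropriate $M'\in\GO_2(K)$ determined by $M$. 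I would verify this by direct computation on the rotation $\begin{psmallmatrix} \cos\theta & -\sin\theta\\ \sin\theta & \cos\theta\end{psmallmatrix}$ (using the half-angle: $\phi_M$ is rotation by $\theta/2$ in $(x:z)$) and on the reflection generating $\GO_2(K)/\SO_2(K)$, then extend by the group law; the characteristic-$\neq 2$, $-1$-not-square hypotheses ensure these identities make sense over $K$ after clearing the trigonometric parametrization in favor of algebraic coordinates on $\GO_2$. Then $\tau:(x:y:z)\mapsto(\phi_M(x:z), y)$ (with $z$-coordinate updated accordingly) gives the isomorphism $\mathcal{H}_\eta\to\mathcal{H}_{\eta M'}$. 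Commutation with $\Gamma$ requires checking that $\phi_M$ commutes with the generators of $\Gamma$ acting on $(x:z)$, namely $(x:z)\mapsto(x:z)$ (trivial for $\sigma_1$, which only flips $y$) and $(x:z)\mapsto(-z:x)$ (for $\sigma_2$); the latter is itself a rotation by $\pi/2$ in $(x:z)$, so it commutes with every rotation $\phi_M$ and one checks it normalizes the reflection appropriately — since $\Gamma$ is central enough in the relevant $2$-torsion picture this goes through.

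Case (iii) is immediate: the map $(x:y:z)\mapsto(x:\lambda y:z)$ is a well-defined isomorphism of weighted projective space (recall $y$ has weight $2$, but here $\lambda$ is a fixed scalar, not part of the grading action) carrying $y^2 = N(\eta v)$ to $y^2 = \lambda^2 N(\eta v) = N(\lambda\eta v) = N(\eta' v)$, and it commutes with $\Gamma$ since $\Gamma$'s action on the $y$-coordinate is just $y\mapsto\pm y$, which commutes with $y\mapsto\lambda y$. Finally I would assemble: given $\eta' = \lambda M_1\eta M_2$, compose the isomorphism of case (i) for $M_1$, of case (ii) for $M_2$ (tracking the induced $M'$), and of case (iii) for $\lambda$; each is defined over $K$ and $\Gamma$-equivariant, hence so is the composite, proving the lemma.
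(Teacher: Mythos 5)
Your proposal is correct and follows essentially the same route as the paper: the left $\GO_2$-factor is absorbed by the invariance of the norm form $N$, the scalar by rescaling $y$, and the right $\GO_2$-factor by lifting the rotation/reflection through the doubling map $(x:z)\mapsto(z^2-x^2:2xz)$ using the rational (half-angle) parametrization $u=\frac{t^2-s^2}{t^2+s^2}$, $v=\frac{2st}{t^2+s^2}$ of the circle, all of which the paper packages into the single formula $\tau(x:y:z)=(\epsilon(tx+sz):\lambda(s^2+t^2)y:tz-sx)$. The only detail your sketch elides is that the lifted substitution multiplies the vector $(z^2-x^2,2xz)$ by the scalar $s^2+t^2$ in addition to acting by an orthogonal matrix, so $y$ must also be rescaled by $s^2+t^2$ (this is not absorbed by the weighted-projective scaling unless $s^2+t^2$ is a square); it falls out immediately from the direct computation you propose to carry out.
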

\begin{proof}
	Let $\eta'=\lambda r_1\eta r_2^{-1}$, where $\lambda\in K^\times$ and $r_1,r_2\in \GO_2(K)$. Write $r_2=\begin{psmallmatrix}
		u&-v\\\epsilon v&\epsilon u
	\end{psmallmatrix}$, where $u,v\in K$, $\epsilon=\pm 1$, and $u^2+v^2=1$. There exists $s,t\in K$ so that $u=\frac{t^2-s^2}{t^2+s^2}$ and $v=\frac{2st}{t^2+s^2}$. Then for any $(x:y:z)\in\mathcal{H}_{\eta}(\overline{K})$,
	\begin{align*}
		(\lambda(s^2+t^2) y)^2 &=N\left(\lambda(s^2+t^2) \begin{pmatrix}
			a&b\\c&d
		\end{pmatrix}\begin{pmatrix}
			z^2-x^2\\2xz
		\end{pmatrix}\right)\\
		&=N\left(r_1\lambda  \begin{pmatrix}
			a&b\\c&d
		\end{pmatrix}r_2^{-1}\begin{pmatrix}
			t^2-s^2&-2st\\2\epsilon st&\epsilon (t^2-s^2)
		\end{pmatrix}\begin{pmatrix}
			z^2-x^2\\2xz
		\end{pmatrix}\right)\\
		&=N\left(\lambda r_1 \begin{pmatrix}
			a&b\\c&d
		\end{pmatrix}r_2^{-1}\begin{pmatrix}
			(tz-sx)^2-(tx+sz)^2\\2\epsilon (tz-sx)(tx+sz)
		\end{pmatrix}\right).
	\end{align*}
	Thus the map 
	\[\tau:(x:y:z)\mapsto (\epsilon(tx+sz):\lambda(s^2+t^2) y:tz-sx)\]
	defines an isomorphism $\mathcal{H}_{\eta}\to \mathcal{H}_{\eta'}$, and the involutions $y\mapsto -y$ and $(x:z)\mapsto (-z:x)$ are preserved.
	\begin{comment}
		So far we have shown that $\tau$ is a $\Gamma$-equivariant isomorphism over $K$. To show that $\tau$ preserves the torsor structure, let $\psi:\mathcal{H}_{\eta}\to E_{\mu,\rho}$ be defined by $\psi(P)=\Psi(P,P_0)$ (\cref{eq:homogspace}) for some $P_0\in \mathcal{H}_\eta(\overline{K})$; explicitly, we can take $P_0=(0:\sqrt{a^2+c^2}:1)$. We can also define $\psi':\mathcal{H}_{\eta'}\to E_{\mu,\rho}$ by $\psi'(P)=\Psi'(P,\tau(P_0))$. Letting $\omega=\frac{dx}{y}$ be the invariant differential on $E_{\mu,\rho}$, we compute
		\begin{align*}
			\tau^*\psi'^*\omega = \psi^*\omega.
		\end{align*}
		Since the map from endomorphisms of $E_{\mu,\rho}$ to differentials is injective, this proves that $\psi'\tau\psi^{-1}$ is a translation map.
	\end{comment}
\end{proof}

Given $\eta\in\base(K)$, suppose $\eta$ is in the same double coset as an element of the form $\eta'=\begin{psmallmatrix}
	1&r\\0&s
\end{psmallmatrix}\in\GL_2(K)$. We have $(0:1:1)\in\mathcal{H}_{\eta'}(K)$, so by \cref{prop:doublecosiso}, we can conclude that $\mathcal{H}_\eta(K)$ is nonempty. The following lemma gives us the converse result: if $\mathcal{H}_\eta(K)$ is nonempty then $\eta$ is in the same double-coset as a matrix of the form $\eta'=\begin{psmallmatrix}
1&r\\0&s
\end{psmallmatrix}$.

\begin{lem}\label{lem:solubletriangular}
	Let $\eta=\begin{psmallmatrix}
		a&b\\c&d
	\end{psmallmatrix}\in\GL_2(K)$. Suppose there is a point $P=(x_0:y_0:z_0)\in\mathcal{H}_\eta(K)$. Define
	\begin{align}\label{eq:cddef}
		\begin{aligned}
			r&:=\frac{(a b + c d) ((z_0^2-x_0^2)^2 - (2x_0z_0)^2) - (a^2 - b^2 + c^2 - d^2) (z_0^2-x_0^2)(2x_0z_0)}{y_0^2},\\
			s&:=\frac{(ad-bc)(z_0^2+x_0^2)^2}{y_0^2}.
		\end{aligned}
	\end{align}
	Then $\eta\in K^\times \GO_2(K)\begin{psmallmatrix}
		1&r\\0&s
	\end{psmallmatrix}\GO_2(K)$.
\end{lem}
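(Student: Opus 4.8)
The strategy is to produce rotation matrices $R_1,R_2\in\GO_2(K)$ and a scalar $\lambda\in K^\times$ with $R_2^{-1}\eta R_1=\lambda\begin{psmallmatrix}1&r\\0&s\end{psmallmatrix}$; then $\eta=\lambda R_2\begin{psmallmatrix}1&r\\0&s\end{psmallmatrix}R_1^{-1}$ exhibits $\eta\in K^\times\GO_2(K)\begin{psmallmatrix}1&r\\0&s\end{psmallmatrix}\GO_2(K)$, and reading off the entries will give the formulas in \cref{eq:cddef}. Everything hinges on how $\GO_2$ interacts with the norm form $N(u,v)=u^2+v^2$: since $-1$ is not a square in $K$, the form $N$ is anisotropic, so any nonzero $w\in K^2$ has $N(w)\neq 0$, and if moreover $N(w)=\lambda^2$ with $\lambda\in K^\times$, then $w/\lambda$ is a unit vector and $R_w:=\begin{psmallmatrix}\lambda^{-1}w_1&-\lambda^{-1}w_2\\\lambda^{-1}w_2&\lambda^{-1}w_1\end{psmallmatrix}\in\GO_2(K)$ sends $\begin{psmallmatrix}1\\0\end{psmallmatrix}$ to $w/\lambda$.

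I would apply this twice. First take $w:=\begin{psmallmatrix}z_0^2-x_0^2\\2x_0z_0\end{psmallmatrix}$; then $N(w)=(z_0^2+x_0^2)^2$, and $z_0^2+x_0^2\neq 0$ since $(x_0,z_0)\neq(0,0)$ (if $x_0=z_0=0$ the equation of $\mathcal{H}_\eta$ forces $y_0=0$, impossible in $\PP^2$) and $-1$ is not a square; so there is a rotation $R_1\in\GO_2(K)$ with $R_1\begin{psmallmatrix}1\\0\end{psmallmatrix}=w/(z_0^2+x_0^2)$. Second, the statement $P\in\mathcal{H}_\eta(K)$ is exactly $y_0^2=N(\eta w)$; since $\eta$ is invertible and $w\neq 0$ we have $\eta w\neq 0$, hence $y_0\neq 0$, so there is a rotation $R_2\in\GO_2(K)$ with $R_2\begin{psmallmatrix}1\\0\end{psmallmatrix}=\eta w/y_0$. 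By construction the first column of $R_2^{-1}\eta R_1$ is $\tfrac{y_0}{z_0^2+x_0^2}\begin{psmallmatrix}1\\0\end{psmallmatrix}$, so $R_2^{-1}\eta R_1$ is upper triangular with $(1,1)$-entry $\lambda:=y_0/(z_0^2+x_0^2)\in K^\times$, and the reduction in the first paragraph applies.

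It then remains to match $r$ and $s$ with \cref{eq:cddef}, which is a direct computation: writing $R_i^{-1}=R_i^t$ and multiplying out $R_2^t\,\eta\,R_1$, one checks the $(2,1)$-entry vanishes, the $(1,1)$-entry is $y_0/(z_0^2+x_0^2)$, the $(2,2)$-entry simplifies via $(z_0^2-x_0^2)^2+(2x_0z_0)^2=(z_0^2+x_0^2)^2$ to $(ad-bc)(z_0^2+x_0^2)/y_0$, and the $(1,2)$-entry becomes $\big[(ab+cd)((z_0^2-x_0^2)^2-(2x_0z_0)^2)-(a^2-b^2+c^2-d^2)(z_0^2-x_0^2)(2x_0z_0)\big]/\big(y_0(z_0^2+x_0^2)\big)$; dividing the bottom row and the $(1,2)$-entry by $\lambda$ gives precisely $s$ and $r$. (Both expressions are homogeneous of degree $0$ under $(x_0,y_0,z_0)\mapsto(\kappa x_0,\kappa^2 y_0,\kappa z_0)$, hence depend only on $P$.) The conceptual steps are forced once anisotropy of $N$ is used to pin down $R_1$ and $R_2$, so I expect the only real work — and the place to be careful — to be this last expansion, in particular collecting the $(1,2)$-entry into the stated form.
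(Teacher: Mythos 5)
Your proposal is correct and is essentially the paper's own proof: the paper likewise establishes $x_0^2+z_0^2\neq 0$ and $y_0\neq 0$ using that $-1$ is not a square, writes down exactly the rotations you call $R_2^{-1}$ and $R_1$ (as explicit elements of $\SO_2(K)$ built from $\eta w/y_0$ and $w/(z_0^2+x_0^2)$), and verifies $\frac{z_0^2+x_0^2}{y_0}R_2^{-1}\eta R_1=\begin{psmallmatrix}1&r\\0&s\end{psmallmatrix}$ by the same direct computation. Your entry-by-entry expansion of the $(1,1)$, $(2,1)$, $(2,2)$, and $(1,2)$ entries checks out against \cref{eq:cddef}.
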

\begin{proof}
	Suppose $x_0^2+z_0^2=0$. If $z_0\neq 0$, then $\left(\frac{x_0}{z_0}\right)^2=-1$, contradicting the assumption that $-1$ is not a square in $K$. Hence $z_0=0$, and likewise $x_0=0$. But this implies $y_0=0$, which contradicts the fact that $(x_0:y_0:z_0)\in \widetilde{\PP^2}(K)$.
	
	If $y_0=0$, then a similar argument shows that we must have
	\[a(z_0^2-x_0^2)+b(2x_0z_0)=c(z_0^2-x_0^2)+d(2x_0z_0)=0.\]
	But this implies that the nonzero vector $(z_0^2-x_0^2,2x_0z_0)$ is in the kernel of $\begin{psmallmatrix}
		a&b\\c&d
	\end{psmallmatrix}$, contradicting the assumption that $\eta\in\GL_2(K)$. Hence $y_0\neq 0$.
	
	Since $x_0^2+z_0^2\neq 0$ and $y_0\neq 0$, the matrices
	\begin{align*}
		r_1&=\frac{1}{y_0}\begin{pmatrix}
			a(z_0^2-x_0^2)+b(2x_0z_0)&c(z_0^2-x_0^2)+d(2x_0z_0)\\-c(z_0^2-x_0^2)-d(2x_0z_0)&a(z_0^2-x_0^2)+b(2x_0z_0)
		\end{pmatrix}\\
		r_2&=\frac{1}{z_0^2+x_0^2}\begin{pmatrix}
			z_0^2-x_0^2&-2x_0z_0\\2x_0z_0&z_0^2-x_0^2
		\end{pmatrix}
	\end{align*}
	are both well-defined elements of $\SO_2(K)$. We can check by direct computation that $\frac{z_0^2+x_0^2}{y_0} r_1\eta r_2=\begin{psmallmatrix}
		1&r\\0&s
	\end{psmallmatrix}$.
\end{proof}

\begin{comment}
\begin{rmk}\label{rmk:notconverse}
	If $K$ contains a square root of $-1$ then this result does not hold. For instance, let $K=\QQ(i)$ and $\eta=\begin{psmallmatrix}
		2&1\\2i&1
	\end{psmallmatrix}$. The curve $\mathcal{H}_\eta$ has two $K$-points $(0:0:1)$ and $(1:0:0)$; we can check that there are no others by computing the Mordell-Weil group of the Jacobian $y^2=x^3 + 2x^2 - 8ix$ over $\QQ(i)$ and checking that it has only two points. But for any $\eta'=\begin{psmallmatrix}
		a&b\\0&d
	\end{psmallmatrix}\in\GL_2(K)$, there are at least four points in $\mathcal{H}_{\eta'}(K)$, given by $(0:a:1)$, $(0:-a:1)$, $(-1:a:0)$, and $(-1:-a:0)$. Hence $\mathcal{H}_\eta$ is not isomorphic to $\mathcal{H}_{\eta'}$ for any upper triangular matrix $\eta'$.
\end{rmk}
\end{comment}

\subsection{Isomorphism classes of fibers}\label{sec:singular}

The curve $H_\eta$ is singular when the discriminant (\cref{eq:Hdisc}) vanishes. Since $ad-bc\neq 0$ for all $\eta\in \GL_2(K)$ and $K$ does not contain a square root of $-1$, this can only occur if $a=-d$ and $b=c$, or if $a=d$ and $b=-c$. One of these two conditions holds if and only if $a^2+b^2=c^2+d^2$ and $ac+bd=0$; thus the singular fibers $H_\eta$ are exactly those with 
\[\eta\eta^t=(a^2+b^2)I.\]
In this case $H_\eta$ reduces to the form 
\[y^2=(a^2+b^2)(z^2+x^2)^2.\]
If $a^2+b^2=\lambda^2$ then $H_\eta$ splits into two conics, $y=\pm \lambda(z^2+x^2)$. If $a^2+b^2$ is not a square in $K$, then there are no solutions in $\mathbb{P}^2(K)$.

If $H_\eta$ has a rational point and the discriminant (\cref{eq:Hdisc}) does not vanish at $\eta$, then $H_\eta$ is isomorphic to its Jacobian. Using \cref{lem:solubletriangular} and \cref{eq:jacobian}, we can conclude that $H_\eta$ is isomorphic to
\[E_{r,s}:y^2=x^3+(1+r^2+s^2)x^2+s^2x\]
for some $r,s\in K$; the non-vanishing of the discriminant says that $s\neq 0$ and $(r,s)\neq (0,\pm 1)$. This completes the proof of \cref{prop:isomorphicelliptic}.

\subsection{Nonsingular fibers with a rational point}\label{sec:nonsingular}

We now restrict our attention to $K=\QQ$ in order to prove \cref{thm:Qinfpoints}, which we recall for convenience.

\Qinfpointsrestate*

\begin{proof}
	Assume $R:=(-1,r)$ is torsion in $E_{r,s}(\QQ)$.
	Since $x^2+(1+r^2+s^2)x+s^2$ is positive on an open interval around $x=0$, there exists $-1<x<0$ for which $E_{r,s}(\RR)$ does not contain any point of the form $(x,y)$. Thus $E_{r,s}(\RR)$ has two components, with $R:=(-1,r)$ on the non-identity component and $T:=(0,0)$ on the identity component. This shows $R$ is not a multiple of $2$ in $E_{r,s}(\RR)$, and hence $R$ cannot have odd order. By Mazur's classification of torsion subgroups, we can conclude that if $R$ is torsion then its order must be an even number at most $12$. If $R$ has order $10$ then the only possibility for the torsion subgroup of $E_{r,s}(\QQ)$ is $\Z/{10}\Z$, so that $T$ is the unique element of order $2$. This implies $T=5R$, which again leads to a contradiction when we consider the component group of $E_{r,s}(\RR)$.
	
	We can conclude that if $R$ is torsion, it must have order $\ell\in \{2,4,6,8,12\}$. For each such $\ell$, let $\psi_\ell(r,s,x)\in\ZZ[r,s,x]$ denote the $\ell$-th division polynomial on $E_{r,s}$; this is a polynomial with the property that $\psi_\ell(r,s,x)=0$ for $x\in\overline{\QQ}$ if and only if $(x,y)\in E_{r,s}(\overline{\QQ})[\ell]$ for some $y\in\overline{\QQ}$ (see for instance \cite[Exercise 3.7]{silverman}). We compute the division polynomial $\psi_\ell(r,s,x)$, and determine all possible $(r,s)\in\QQ^2$ such that $\psi_\ell(r,s,-1)=0$. 
	\begin{itemize}
		\item We have $\psi_2(r,s,-1)=-r^2$, so $R$ has order $2$ if and only if $r=0$.
		\item We have
		\[\frac{\psi_4(r,s,-1)}{\psi_2(r,s,-1)}=-2 (s-1)(s+1) (2r^2s^2+2r^2 + (s^2-1)^2).\]
		The last factor is a sum of non-negative terms, including at least one positive term because $(r,s)\neq (0,\pm 1)$. Hence $R$ has order $4$ if and only if $s=\pm 1$.
		\item The quotient of $\psi_6(r,s,-1)$ by $\psi_2(r,s,-1)\psi_3(r,s,-1)$ factors into two irreducible  polynomials in $\QQ[r,s]$. The first factor is $4r^2s^2+(s^2-1)^2$, which is positive for all $(r,s)\neq(0,\pm 1)$. The second factor is
		\[16s^2r^4-4(s^2-1)^2(s^2+1)r^2-3(s^2-1)^4.\]
		Considering this as a quadratic polynomial in $r^2$, the discriminant is equal to 
		\[16(s^2-1)^4(s^4+14s^2+1).\]
		In order for $r^2$ to be rational (let alone $r$), this discriminant must equal a rational square. Thus we consider rational points on the curve $C$ defined by $y^2=s^4+14s^2+1$. There are eight rational points $(s,y)\in C(\QQ)$: two at infinity, as well as $(-1,\pm 4)$, $(0,\pm 1)$, and $(1,\pm 4)$. Using the Weierstrass form $y^2=x^3-7x^2+12x$ for $C$ we can confirm that $C$ has no other rational points, so the only possibilities for $s$ are $-1,0,1$. If $s=0$ then we have $r^2=-\frac 34$, yielding no rational solutions. If $s=\pm 1$ then we have $r=0$, contradicting $(r,s)\neq(0,\pm 1)$.
		\item The quotient of $\psi_8(r,s,-1)$ by $\psi_4(r,s,-1)$ factors into three irreducible polynomials in $\QQ[r,s]$. 
		The first two factors are $4r^2s-(s^2-1)^2$ and $4r^2s+(s^2-1)^2$; these each have infinitely many rational solutions. The third factor is
		\begin{comment}
		\begin{align*}
			&128s^2(s^4+1)r^8+128s^2(s^2-1)^2(s^2+1)r^6+8(s^2-1)^4(s^4+10s^2+1)r^4\\
			&\qquad+8(s^2-1)^6(s^2+1)r^2+(s^2-1)^8,
		\end{align*}
			which is 
		\end{comment}
		positive for all $(r,s)\neq(0,\pm 1)$.
		\item If we eliminate common factors with $\psi_6(r,s,-1)$ and $\psi_4(r,s,-1)$ from $\psi_{12}(r,s,x)$, we are left with three irreducible polynomials in $\QQ[r,s]$. The first factor is 
		\[16s(s^2-s+1)r^4+8s(s^2-1)^2r^2+(s^2-1)^4.\]
		Considered as a quadratic in $r^2$, the discriminant is $-64s(s-1)^6(s+1)^4$, which is a square if and only if $s=-k^2$ for some $k\in\QQ$. Plugging this in and solving for $r^2$, we find that either
		\[r^2=\frac{(k^4-1)^2}{4 k (k^2+k+1)}\qquad\text{or}\qquad r^2=-\frac{(k^4-1)^2}{4 k (k^2-k+1)}.\]
		For the first option, we obtain $r\in\QQ$ if and only if $k^3+k^2+k$ is a nonzero square. The only rational points on the elliptic curve $y^2=k^3+k^2+k$ are the point at infinity and $(k,y)=(0,0)$, so there is no $k\in\QQ$ for which $r$ is rational. For the second option, we obtain $r\in\QQ$ if and only if $(-k)^3+(-k)^2+(-k)$ is a nonzero square, and by the same reasoning there is no such $k$. Hence this factor is nonzero for all $(r,s)\in\QQ^2$.
		
		The second factor is obtained from the first by $s\mapsto -s$, so it has no rational solutions either.
		The third factor is
		\begin{comment}
		\begin{align*}
			&65536 s^6 (s^4 - s^2 + 1)r^{16}+65536s^6(s^2-1)^2(s^2+1)r^{14}\\
			&\qquad +4096s^2(s^2-1)^4(s^8+3s^6+20s^4+3s^2+1)r^{12}\\
			&\qquad+4096s^2(s^2-1)^6(s^2+1)(2s^4+3s^2+2)r^{10}\\
			&\qquad+256s^2(s^2-1)^8(23s^4+24s^2+23)r^8
			+1792s^2(s^2-1)^{10}(s^2+1)r^6\\
			&\qquad+16(s^2-1)^{12}(s^4+26s^2+1)r^4
			+16(s^2-1)^{14}(s^2+1)r^2
			+(s^2-1)^{16},
		\end{align*}
		\end{comment}
		positive for all $(r,s)\neq(0,\pm 1)$.
	\end{itemize}
	To summarize, we obtain the following possibilities:
	\begin{itemize}
		\item $(-1,r)$ has order $2$ if and only if $r=0$;
		\item $(-1,r)$ has order $4$ if and only if $s=\pm 1$;
		\item $(-1,r)$ has order $8$ if and only if $4r^2s=\pm (1-s^2)^2$.
		\item There are no values of $(r,s)\in\QQ^2$ for which $(-1,r)$ has any other finite order.\qedhere
	\end{itemize}
\end{proof}

\section{Upper bound on locally soluble curves}\label{sec:upperbound}

In this section we prove \cref{thm:upperlower}. The main idea is to prove a local obstruction to the existence of $\QQ_p$-points (\cref{lem:localobstruct}). This obstruction is ``large,'' in the sense that the proportion of curves satisfying the obstruction is approximately a constant multiple of $\frac 1p$. By contrast, each local obstruction in \cref{bigfamilycompare} only affects $O(\frac1{p^2})$ of all curves. At a high level, the difference in behavior between the two families stems from the fact that $\sum\frac1p$ diverges but $\sum \frac1{p^2}$ converges.

\subsection{Local Obstructions}

Given a ring $A$ we use $M_2(A)$ to denote the ring of $2\times 2$ matrices over $A$. 
For primes $p$ we define
\[R_p:=\left\{\eta\in M_2(\Z_p)\cap \GL_2(\Q_p):H_\eta(\Q_p)=\emptyset\right\};\]
Thus $\mathcal{L}(X)$ counts the set of $\eta\in M_2(\Z)\cap\GL_2(\Q)$ with entries of absolute value at most $X$ such that $\eta\notin R_p$ for all primes $p$ (note that there is never a real obstruction: $H_\eta(\RR)\neq\emptyset$ for all $\eta\in\GL_2(\Q)$.)

The main contribution to $R_p$ will come from the following constraint.
\begin{lem}\label{lem:localobstruct}
	Let $p$ be an odd prime and $\eta=\begin{psmallmatrix}
		a&b\\c&d
	\end{psmallmatrix}\in M_2(\Z_p)\cap \GL_2(\Q_p)$. Suppose $p\mid ad-bc$, and $a$, $a^2+b^2$, and $a^2+c^2$ are all nonzero mod $p$. Then $H_\eta(\Q_p)$ is nonempty if and only if at least one of $a^2+b^2$ or $a^2+c^2$ is a square modulo $p$.
\end{lem}
\begin{proof}
	Assume $H_\eta(\Q_p)$ has a point $(x:y:z)$; without loss of generality we can assume that $x,y,z$ are in $\Z_p$ and at least one of $x,z$ is in $\Z_p^\times$. Reducing modulo $p$ we have
	\begin{align}\label{eq:modp}
		{a}^2{y}^2=({a}^2 + {c}^2) ({a}({z}^2-{x}^2) + {b}(2{x}{z}))^2.
	\end{align}
	If ${a}({z}^2-{x}^2) + {b}(2{x}{z})\neq 0$, then 
	\[{a}^2 + {c}^2=\left(\frac{a}{y}{{a}({z}^2-{x}^2) + {b}(2{x}{z})}\right)^2\]
	is a square. On the other hand, suppose ${a}({z}^2-{x}^2) + {b}(2{x}{z})=0$. Since $a\neq 0$ and at least one of $x,z$ is nonzero we must have $xz\neq 0$, so
	\begin{align*}
		{a}^2+{b}^2&={a}^2+\left(\frac{-{a}({z}^2-{x}^2)}{2{x}{z}}\right)^2=\left(\frac{{a}({z}^2+{x}^2)}{2{x}{z}}\right)^2
	\end{align*}
	is a square.
	
	Conversely, if $a^2+c^2$ is a nonzero square mod $p$ then \cref{eq:modp} clearly has solutions with ${y}\neq 0$; these are smooth points on $H_\eta(\F_p)$ so they lift to points on $H_\eta(\Q_p)$. If $a^2+b^2$ is a nonzero square mod $p$, then it is a square in $\Q_p$, so there exist $x,z\in\Q_p^\times$ with $\frac{b}{a}=\frac{x^2-z^2}{2xz}$. This implies $a(z^2-x^2)+b(2xz)=0$, so $(x:c(z^2-x^2)+d(2xz):z)$ is a point in $H_\eta(\Q_p)$.
\end{proof}

In light of the above, set 
\begin{align*}
	\Omega_p:=\left\{\mu=\begin{pmatrix}{a}&{b}\\{c}&{d}\end{pmatrix}\in M_2(\F_p):{a}{d}-{b}{c}=0,\; \bigg( \frac{{a}^2+{b}^2}p\bigg)=-1,\; \bigg( \frac{{a}^2+{c}^2}p\bigg)=-1\right\},
\end{align*}
where $\left(\frac{\cdot}{p}\right)$ denotes the Legendre symbol.
If by abuse of notation we associate $\Omega_p$ with its preimage in $M_2(\Z_p)$ under reduction mod $p$, we have
\begin{align}\label{eq:containment}
	(\Omega_p\cap \GL_2(\Q_p))\subseteq R_p.
\end{align}
This follows from \cref{lem:localobstruct}: note that the Legendre symbol conditions in the definition of $\Omega_p$ force ${a}$ to be nonzero mod $p$.

\begin{lem}\label{countmodp}
	We have $|\Omega_p|=\frac{1}{4}p^3+O(p^2)$.
\end{lem}
\begin{proof}
	Let $\begin{psmallmatrix}
		{a}&{b}\\{c}&{d}
	\end{psmallmatrix}\in \Omega_p$. Since ${a}^2+{b}^2$ is not a square, we have ${a}\neq 0$. Note that ${a}^2+{r}^2$ is a square in $\F_p$ if and only if ${r}$ is in the image of the map $\phi:\F_p^\times\to\F_p$ given by $\phi({t})= {a}\frac{1-{t}^2}{2{t}}$. We have $\phi({t})=\phi({s})$ if and only if ${s}=-\frac{1}{{t}}$, and so there are $\frac{1}{2}(p\pm 1)$ values in the range of $\phi$ (with the $\pm$ sign depending on whether or not $-1$ is a square mod $p$). Hence ${b}$ and ${c}$ must each be one of the $\frac{1}{2}(p\mp 1)$ values not in the range of $\phi$. Since there are $p-1$ choices for ${a}$, there are $\frac{1}{2}(p\mp 1)$ choices for each of ${b}$ and ${c}$, and the value of ${d}=\frac{{b}{c}}{{a}}$ is fixed, we obtain the desired count.
\end{proof}

The following result is not used in the sequel, but justifies the claim that $\Omega_p$ is the main contribution to $R_p$.

\begin{lem}
	Let $\overline{R_p}$ denote the image of $R_p$ under reduction mod $p$. We have
	\[|\overline{R_p}\setminus\Omega_p|=O(p^2).\]
\end{lem}
\begin{proof}
	We produce a collection of pairs of linear equations over $\F_p$ with the property that every element of $\overline{R_p}\setminus\Omega_p$ satisfies one of these pairs of equations. Let $\bar{\eta}=\begin{psmallmatrix}
		a&b\\c&d
	\end{psmallmatrix}\in M_2(\F_p)$.
	If 
	\[(ad-bc)((a-d)^2+(b+c)^2)((a+d)^2+(b-c)^2)\neq 0,\]
	then for any lift $\eta\in M_2(\Z_p)\cap \GL_2(\Q_p)$, the discriminant of $H_\eta$ (\cref{eq:Hdisc}) is not divisible by $p$, and so $H_\eta(\F_p)$ is nonempty by the Hasse-Weil bound. As these are all smooth points, they lift to points in $H_\eta(\Q_p)$. Thus $\bar{\eta}$ is not in $\overline{R_p}$.
	
	We can therefore assume that exactly one of the following constraints holds:
	\begin{enumerate}[label=(\alph*)]
		\item ${a}{d}-{b}{c}=0$ and ${a}=0$,
		\item ${a}{d}-{b}{c}=0$ and ${a}\neq 0$,
		\item ${a}{d}-{b}{c}\neq 0$, $({a}\mp {d})^2+({b}\pm {c})^2=0$, and ${a}\mp {d}=0$,
		\item ${a}{d}-{b}{c}\neq 0$, $({a}\mp {d})^2+({b}\pm {c})^2=0$ and ${a}\mp {d}\neq 0$.
	\end{enumerate}
	In case (a) or (c), $\bar{\eta}$ must satisfy one of the following pairs of linear equations over $\F_p$:
	\[{a}={b}=0,\qquad {a}={c}=0,\qquad\text{or}\qquad {a}\mp{d}={b}\pm{c}=0.\]
	Now consider case (b). If ${a}^2+{b}^2=0$ or ${a}^2+{c}^2=0$, then using ${a}\neq 0$ and ${a}{d}={b}{c}$ we can conclude that
	\[{a}+{i}{b}={c}+{i}{d}=0\qquad\text{or}\qquad {a}+{i}{c}={b}+{i}{d}=0\]
	for some ${i}\in\F_p$ satisfying ${i}^2=-1$. If on the other hand ${a}^2+{b}^2$ and ${a}^2+{c}^2$ are both nonzero, then $\bar{\eta}$ is not in $\overline{R_p}$ by \cref{lem:localobstruct}.
	
	Finally we consider case (d). We assume $(a-d)^2+(b+c)^2=0$ and $a-d\neq 0$ as the other case is similar. Then $b+c= i(a-d)$ for some $i\in\F_p$ with $i^2=-1$. The reduction of $H_\eta$ modulo $p$ is given by \[y^2=(z + i x)^2 h(x,z)\]
	where
	\[h(x,z):=(a^2 + c^2) (z^2 - x^2) + (2 a (b + c) + 
	i (c^2 - a^2 + 2 b c)) (2 x z).\]
	If the discriminant of $h(x,z)$ is nonzero, then the equation $r^2=h(x,z)$ defines a smooth projective conic over $\F_p$, and there must exist at least one point $(x,z,r)$ on this conic with $z+ix\neq 0$. Then $(x:(z+ix)r:z)$ defines a smooth point in $H_\eta(\F_p)$, which lifts to a point in $H_\eta(\Q_p)$. Hence $\bar{\eta}$ is not in $\overline{R_p}$. Computing the discriminant of $h(x,z)$, we find that elements in $\overline{R_p}\setminus\Omega_p$ in this case must satisfy
	\[b+c-i(a-d)=(b- i a) (a + i c) (b + c)=0.\]
\end{proof}

\subsection{Proof of \cref{thm:upperlower}} 

The author would like to thank Sun-Kai Leung for suggesting the following proof.

Recall that $\mathcal{L}(X)$ is the set of all $\eta\in M_2(\Z)$ with nonzero determinant, entries having absolute value at most $X$, and with $\eta\notin R_p$ for all $p$. Set
\[\mathcal{M}(X):=\left\{\eta\in M_2(\Z)\cap\GL_2(\Q):\eta\notin \Omega_p\text{ for all }p\right\}.\]
By \cref{eq:containment} we can see that $\mathcal{L}(X)\subseteq \mathcal{M}(X)$, and so it suffices to find an upper bound for $\mathcal{M}(X)$. 

Set
\begin{align*}
	\mathscr{S}(Y)&:= \sum_{m\leq Y}\mu^2(m)\prod_{\text{prime }p\mid m}\frac{|\Omega_p|}{p^4-|\Omega_p|},
\end{align*}
where $\mu(m)$ is the M\"obius function (so $\mu^2(m)=1$ if $m$ is squarefree and $\mu^2(m)=0$ otherwise). Applying the $n$-dimensional large sieve \cite[Lemma B]{gallagher}, we obtain
\begin{align*}
	|\mathcal{L}(X)|\leq|\mathcal{M}(X)|\ll \frac{X^4}{\mathscr{S}(\sqrt{X})},
\end{align*}
where $f(X)\ll g(X)$ means that for some positive constant $C$ we have $f(X)<Cg(X)$ for sufficiently large $X$. \cref{thm:upperlower} follows immediately from this bound after applying the following lemma.

\begin{lem}
	We have
	\[\mathscr{S}(Y)\gg (\log Y)^{1/4}.\]
\end{lem}
\begin{proof}
	We have $|\Omega_p|\geq 0$, and by \cref{countmodp} we have $|\Omega_p|\geq \frac14(p^3-Cp^2)$ for some positive constant $C$, so
	\begin{align*}
		\mathscr{S}(Y)&\geq \sum_{m\leq Y}\mu^2(m)\prod_{\text{prime }p\mid m}\frac{p^3-Cp^2}{4p^4}\\
		&=\sum_{m\leq Y}\frac{\mu^2(m)}{m}\prod_{p\mid m}\frac14\left(1-\frac Cp\right).
	\end{align*}
	Set 
	\[f(m):= \mu^2(m)\prod_{\text{prime }p\mid m}\frac14\left(1-\frac Cp\right).\]
	Note that $f$ is a multiplicative function, it is positive for sufficiently large $p$, and
	\begin{align*}
		\sum_{\text{prime }p\leq Y}\frac{f(p)\log p}{p}&=\frac14\sum_{p\leq Y}\frac{\log p}{p}-C\sum_{p\leq Y}\frac{\log p}{p^2}\\
		&=\frac14\log(Y)+O(1)
	\end{align*}
	(see for instance \cite[p.~57]{davenport}). Hence, by Wirsing's Theorem (\cite[Theorem 14.3]{koukoulopoulos} with $\kappa=\frac14$, $c=0$, $k=1$) we have
	\begin{align*}
		\mathscr{S}(Y)&\geq \sum_{m\leq Y}\frac{f(m)}{m}\gg (\log Y)^{1/4}.
	\end{align*}
\end{proof}

\section{Applications to Rational Distance Problems}\label{sec:applications}

\subsection{From $H_\eta(K)$ to rational configurations}

We return temporarily to the more general setting of a field $K$ in which $-1$ is not a square. Let $\eta=\begin{psmallmatrix}
	a&b\\c&d
\end{psmallmatrix}\in\GL_2(K)$. In \cref{eq:Hdef}, we defined the variety
\begin{align*}
	\mathcal{H}:y^2&=N\left(\begin{pmatrix}
		a&b\\c&d
	\end{pmatrix}\begin{pmatrix}
		z^2-x^2\\2xz
	\end{pmatrix}\right),
\end{align*}
where $N(u,v):=u^2+v^2$. We also define the subvariety $\mathcal{F}$ of $\mathbb{P}^1\times\mathbb{P}^1\times \GL_2$ (with coordinates $((u_1:v_1),(u_2:v_2),\begin{psmallmatrix}
	a&b\\c&d
\end{psmallmatrix})$ by
\[\mathcal{F}:au_1u_2+bu_1v_2+cv_1u_2+dv_1v_2=0.\]
As with $\mathcal{H}$, there is a projection map to $\GL_2$ giving $\mathcal{F}$ the structure of a flat family of curves, with $F_\eta$ denoting the fiber over $\eta\in\GL_2(K)$. We define
\begin{align*}
	\mathcal{S}_K&:=\{(u:v)\in \mathbb{P}^1(K)\mid u^2+v^2\text{ is a square in }K\}\\
	&=\{(z^2-x^2:2xz)\mid (x:z)\in \mathbb{P}^1(K)\}.
\end{align*}
\begin{prop}\label{prop:bijection}
	There is a morphism $\Phi:\mathcal{H}\to \mathcal{F}$ over $\GL_2$ defined by sending $(x:y:z)$ to
	\begin{align}
		\left((-c(z^2-x^2)-d(2xz):a(z^2-x^2)+b(2xz)),\,(z^2-x^2:2xz)\right).
	\end{align}
	Further, $\Phi$ induces a bijection between the set of $\Gamma$-orbits in ${H}_\eta(K)$ and the set $F_\eta(K)\cap(\mathcal{S}_K\times\mathcal{S}_K)$.
\end{prop}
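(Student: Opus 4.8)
\textbf{Proof proposal for \cref{prop:bijection}.}

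The plan is to verify directly that $\Phi$ lands in $\mathcal{F}$, check it is $\Gamma$-invariant, and then construct an inverse to the induced map on orbits. First I would substitute the formulas for the two coordinates of $\Phi(x:y:z)$ into the defining equation $au_1u_2+bu_1v_2+cv_1u_2+dv_1v_2$ of $\mathcal{F}$: with $(u_1:v_1)=(-c(z^2-x^2)-d(2xz):a(z^2-x^2)+b(2xz))$ and $(u_2:v_2)=(z^2-x^2:2xz)$, the expression becomes a quadratic form in $(z^2-x^2, 2xz)$ whose coefficients visibly telescope to zero (the $u_1u_2$ and $v_1u_2$ terms cancel, as do the $u_1v_2$ and $v_1v_2$ terms, after grouping by $a,b,c,d$). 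This shows $\Phi$ is a well-defined morphism $\mathcal{H}\to\mathcal{F}$ over $\GL_2$; one also checks the target coordinates are not simultaneously zero, using that $\eta\in\GL_2$ forbids the kernel from containing $(z^2-x^2,2xz)\neq 0$ (this is the argument already used in \cref{lem:solubletriangular}), and the case $(z^2-x^2,2xz)=(0,0)$ forces $x=z=0$, impossible in $\widetilde{\PP^2}$ since $-1$ is not a square.

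Next I would check $\Gamma$-invariance. Under $\sigma_1:(x:y:z)\mapsto(x:-y:z)$ nothing in the image of $\Phi$ changes, since $\Phi$ does not involve $y$. Under $\sigma_2:(x:y:z)\mapsto(-z:y:x)$, both $z^2-x^2$ and $2xz$ change sign, so each of $(u_1:v_1)$ and $(u_2:v_2)$ is multiplied by $-1$ and hence fixed in $\mathbb{P}^1$. Thus $\Phi$ factors through the quotient $H_\eta\to H_\eta/\Gamma$, giving a map $\overline{\Phi}$ from $\Gamma$-orbits in $H_\eta(K)$; and since $(z^2-x^2:2xz)\in\mathcal{S}_K$ by definition of $\mathcal{S}_K$, and the first coordinate also lies in $\mathcal{S}_K$ because $y^2=N(\eta\cdot(z^2-x^2,2xz)^t)=(a(z^2-x^2)+b(2xz))^2+(c(z^2-x^2)+d(2xz))^2$ exhibits it as a sum of two squares equal to a square, the image lands in $F_\eta(K)\cap(\mathcal{S}_K\times\mathcal{S}_K)$.

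For the bijection I would build the inverse explicitly. Given $((u_1:v_1),(u_2:v_2))\in F_\eta(K)\cap(\mathcal{S}_K\times\mathcal{S}_K)$, use the second description of $\mathcal{S}_K$ to write $(u_2:v_2)=(z^2-x^2:2xz)$ for some $(x:z)\in\mathbb{P}^1$; the fiber of the degree-$2$ map $(x:z)\mapsto(z^2-x^2:2xz)$ over a point of $\mathcal{S}_K$ is precisely a $\sigma_2$-orbit $\{(x:z),(-z:x)\}$, so $(x:z)$ is determined up to $\sigma_2$. The relation $F_\eta$ forces $(u_1:v_1)=(-c(z^2-x^2)-d(2xz):a(z^2-x^2)+b(2xz))$ (the two are proportional, being both orthogonal to $(a(z^2-x^2)+b(2xz), c(z^2-x^2)+d(2xz))$, which is nonzero as above). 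Since $u_1^2+v_1^2$ is a square in $K$, and $u_1^2+v_1^2$ is proportional with square factor to $(a(z^2-x^2)+b(2xz))^2+(c(z^2-x^2)+d(2xz))^2$, there is $y\in K$ with $y^2=(a(z^2-x^2)+b(2xz))^2+(c(z^2-x^2)+d(2xz))^2$, giving a point $(x:y:z)\in H_\eta(K)$, well-defined up to the choice of sign of $y$ (that is $\sigma_1$) and the choice within the $\sigma_2$-orbit. So we get a well-defined $\Gamma$-orbit, and one checks $\overline{\Phi}$ of it is the point we started from, and conversely; I would also note $\Phi$ has degree $4=|\Gamma|$, consistent with being generically $\Gamma$-to-one. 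The main obstacle is bookkeeping the projective scalings carefully: one must confirm that the square classes match up so that a genuine $K$-rational $y$ exists (not merely one over a quadratic extension), which is exactly where the hypothesis $(u_1:v_1)\in\mathcal{S}_K$ — rather than just $(u_2:v_2)\in\mathcal{S}_K$ — is used, and where one must be careful that representatives are chosen so the proportionality constant between $u_1^2+v_1^2$ and the sum of squares is itself a square in $K$.
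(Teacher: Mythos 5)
Your proposal is correct and follows essentially the same route as the paper's proof: the same direct verification that the image satisfies the defining equation of $\mathcal{F}$ and lands in $\mathcal{S}_K\times\mathcal{S}_K$, the same inverse construction via parametrizing $(u_2:v_2)=(z^2-x^2:2xz)$ and recovering $y$ from the square condition on $(u_1:v_1)$, and the same identification of the fibers of $\Phi$ with $\Gamma$-orbits (two choices of $(x:z)$ swapped by $\sigma_2$, two choices of $y$ swapped by $\sigma_1$). The square-class subtlety you flag at the end is exactly the point the paper handles by writing $\alpha_1=(z'^2-x'^2:2x'z')$ and taking $y=\lambda(z'^2+x'^2)$.
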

\begin{proof}
	Notice that the equation defining $\mathcal{F}$ can be written
	\begin{equation}
		\begin{aligned}
			0&=\begin{pmatrix}
				u_1&v_1
			\end{pmatrix}\begin{pmatrix}
				a&b\\c&d
			\end{pmatrix}\begin{pmatrix}
				u_2\\v_2
			\end{pmatrix}.
		\end{aligned}
	\end{equation} 
	The morphism $\Phi$ is well-defined because
	\begin{align*}
		&\left(\begin{pmatrix}
			-c&-d\\a&b
		\end{pmatrix}\begin{pmatrix}
			z^2-x^2\\2xz
		\end{pmatrix}\right)^t\begin{pmatrix}
			a&b\\c&d
		\end{pmatrix}\begin{pmatrix}
			z^2-x^2\\2xz
		\end{pmatrix}\\
		&\qquad=\begin{pmatrix}
			z^2-x^2&2xz
		\end{pmatrix}\begin{pmatrix}
			0&ad-bc\\-ad+bc&0
		\end{pmatrix}\begin{pmatrix}
			z^2-x^2\\2xz
		\end{pmatrix}\\
		&\qquad=0.
	\end{align*}
	Given $(x:y:z)\in\mathcal{H}_\eta(K)$, we have 
	\[(z^2-x^2)^2+(2xz)^2=(z^2+x^2)^2\]
	and
	\[(-c(z^2-x^2)-d(2xz))^2+(a(z^2-x^2)+b(2xz))^2=y^2,\]
	so that $\Phi(x:y:z)\in\mathcal{S}_K\times\mathcal{S}_K$.
	Conversely, given any $(\alpha_1,\alpha_2)\in F_\eta(K)\cap (\mathcal{S}_K\times \mathcal{S}_K)$, we can write $\alpha_2=(z^2-x^2:2xz)$ and $\alpha_1=(z'^2-x'^2:2x'z')$ for some $x,z,x',z'\in K$. The fact that $(\alpha_1,\alpha_2)\in F_\eta(K)$ is then equivalent to 
	\[\begin{pmatrix}
		z'^2-x'^2&2x'z'
	\end{pmatrix}\begin{pmatrix}
		a&b\\c&d
	\end{pmatrix}\begin{pmatrix}
		z^2-x^2\\2xz
	\end{pmatrix}=0.\]
	This implies that $\begin{psmallmatrix}
		a&b\\c&d
	\end{psmallmatrix}\begin{psmallmatrix}
		z^2-x^2\\2xz
	\end{psmallmatrix}$ must equal $\lambda\begin{psmallmatrix}
		-2x'z'\\ z'^2-x'^2
	\end{psmallmatrix}$ for some $\lambda\in K^\times$. In particular, 
	\[N\left(\begin{pmatrix}
		a&b\\c&d
	\end{pmatrix}\begin{pmatrix}
		z^2-x^2\\2xz
	\end{pmatrix}\right)=\lambda^2(z'^2+x'^2)^2,\]
	so that $(x:y:z)\in\mathcal{H}_\eta(K)$ for $y=\lambda(z'^2+x'^2)$. 
	One can then confirm that $\Phi$ maps $(x:y:z)$ to $(\alpha_1,\alpha_2)$, using the computation
	\begin{align*}
		\begin{pmatrix}
			-c&-d\\a&b
		\end{pmatrix}\begin{pmatrix}
			z^2-x^2\\2xz
		\end{pmatrix}&=\begin{pmatrix}
			0&-1\\1&0
		\end{pmatrix}\begin{pmatrix}
			a&b\\c&d
		\end{pmatrix}\begin{pmatrix}
			z^2-x^2\\2xz
		\end{pmatrix}\\
		&=\lambda\begin{pmatrix}
			0&-1\\1&0
		\end{pmatrix}\begin{pmatrix}
			-2x'z'\\z'^2-x'^2
		\end{pmatrix}\\
		&=-\lambda\begin{pmatrix}
			z'^2-x'^2\\2x'z'
		\end{pmatrix}.
	\end{align*}
	Hence $\Phi$ maps ${H}_\eta(K)$ surjectively onto $F_\eta(K)\cap (\mathcal{S}_K\times\mathcal{S}_K)$.
	
	Finally, observe that for each $\alpha_2\in\mathcal{S}_K$, there are two choices for $(x:z)\in \PP^1(K)$ with $(z^2-x^2:2xz)=\alpha_2$, interchanged by the involution $(x:z)\mapsto (-z:x)$. Once $x$ and $z$ are fixed, there are two choices for $y$, interchanged by $y\mapsto -y$. Hence $\Gamma$ acts transitively on the fibers of $\Phi$.
\end{proof}

\begin{rmk}
	For many rational distance problems, solutions $((u_1:v_1),(u_2:v_2))\in F_\eta(K)\cap (\mathcal{S}_K\times\mathcal{S}_K)$ with $u_1v_1u_2v_2=0$ will be considered \emph{degenerate} (as they correspond to rational right triangles with no width). The degenerate locus $u_1v_1u_2v_2=0$ pulls back to the subvariety $\mathcal{D}\subseteq \mathcal{H}$ defined by
	\begin{align}\label{eq:degen}
		\mathcal{D}:xyz(z^4-x^4)(a(z^2-x^2)+b(2xz))(c(z^2-x^2)+d(2xz))=0.
	\end{align}
\end{rmk}

\subsection{Density of rational configuration solutions}

For any embedding $K\hookrightarrow\RR$, if $\mathcal{H}_\eta(K)$ is infinite, we can show that $F_\eta(K)\cap (\mathcal{S}_K\times\mathcal{S}_K)$ is dense in $F_\eta(\RR)$. This is a special case of the following result. Let $E$ denote the Jacobian of $H_\eta$.

\begin{lem}\label{lem:dense}
	Let $\eta\in\GL_2(\RR)$ and suppose $\Delta(\mathcal{H}_\eta)\neq 0$. Let $A\subseteq {H}_\eta(\RR)$ be the image of an infinite subgroup of $E(\RR)$ under some isomorphism $E(\RR)\cong{H}_\eta(\RR)$. Then the image of $A$ under $\Phi:{H}_\eta\to F_\eta$ (\cref{prop:bijection})
	is dense in $F_\eta(\RR)$.
\end{lem}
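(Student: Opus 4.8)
The plan is to reduce the statement to two claims: (i) the closure $\overline{A}$ in $H_\eta(\RR)$ contains an entire connected component $C$ of $H_\eta(\RR)$; and (ii) $\Phi(C)=F_\eta(\RR)$ for \emph{every} connected component $C$ of $H_\eta(\RR)$. Granting these, continuity of $\Phi$ gives $F_\eta(\RR)=\Phi(C)\subseteq\overline{\Phi(A)}$, which is exactly the assertion.

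For (i) I would use the structure of $E(\RR)$ as a compact abelian Lie group of dimension one: its identity component $E(\RR)^{0}$ is a circle, and if $B\le E(\RR)$ is the given infinite subgroup, then $B\cap E(\RR)^{0}$ has index at most two in $B$, hence is an infinite subgroup of the circle $E(\RR)^{0}$. Since the only closed subgroups of a circle are its finite subgroups and the whole circle, $\overline{B}\supseteq E(\RR)^{0}$; transporting along the given homeomorphism $E(\RR)\cong H_\eta(\RR)$, the set $\overline{A}$ then contains the image $C$ of $E(\RR)^{0}$, which is a connected component of $H_\eta(\RR)$.

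For (ii) I would first observe that the projection $F_\eta\to\PP^1$ onto the second factor is an isomorphism, since for $(u_2:v_2)\in\PP^1$ the defining equation of $F_\eta$ recovers $(u_1:v_1)=(-(cu_2+dv_2):au_2+bv_2)$, a well-defined point of $\PP^1$ because $\eta$ is invertible. Under this identification $\Phi$ becomes $\rho\colon(x:y:z)\mapsto(z^2-x^2:2xz)$, which factors as $\rho=g\circ h$ with $h\colon(x:y:z)\mapsto(x:z)$ and $g\colon(x:z)\mapsto(z^2-x^2:2xz)$, so it is enough to show $g(h(C))=\PP^1(\RR)$. Writing $H_\eta\colon y^2=F(x,z)$ with $F$ the sum of the two squares in \cref{eq:Hdef}, I note $F\ge 0$ on $\PP^1(\RR)$ and that $F=0$ would force $\eta\begin{psmallmatrix}z^2-x^2\\2xz\end{psmallmatrix}=0$, hence $(z^2-x^2,2xz)=(0,0)$, which is impossible in $\PP^1$; so $F>0$ everywhere on $\PP^1(\RR)$. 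In particular $H_\eta(\RR)\neq\emptyset$, and since $y\neq 0$ at every real point, the implicit function theorem makes $h$ a proper local homeomorphism $H_\eta(\RR)\to\PP^1(\RR)$ all of whose fibers have cardinality two, i.e.\ a two-sheeted covering map of the circle $\PP^1(\RR)$. Hence $h$ restricted to any single connected component of $H_\eta(\RR)$ is surjective, giving $h(C)=\PP^1(\RR)$. Finally $g$ is surjective on real points: the fiber over $(1:0)$ contains $(0:1)$, and for a target with finite affine coordinate $s\in\RR$ the equation $t^{2}+2st-1=0$ in $t=x/z$ has discriminant $4(s^{2}+1)>0$, hence a real root. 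Therefore $\Phi(C)=g(h(C))=\PP^1(\RR)\cong F_\eta(\RR)$.

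I expect the main obstacle to be claim (ii), and in particular ruling out the a priori possibility that a single connected component of $H_\eta(\RR)$ maps only \emph{partway} around $F_\eta(\RR)$. What rescues this is that the hyperelliptic double cover $h$ has no real branch points: the branch locus is the zero set of $F$, and $F$, being a sum of two squares attached to an invertible matrix, is strictly positive on $\PP^1(\RR)$, so $h$ is an honest covering of circles and no component can be ``incomplete.'' The remaining inputs — the topological-group fact in (i) and the surjectivity of $g$ — are routine.
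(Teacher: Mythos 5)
Your argument is correct and follows essentially the same route as the paper: first use the structure of $E(\RR)$ (identity component a circle, so an infinite subgroup is dense in a full connected component of $H_\eta(\RR)$), then factor $\Phi$ through $(x:y:z)\mapsto(x:z)\mapsto(z^2-x^2:2xz)$ and show each stage surjects from every component. The only difference is that you supply the details of the surjectivity claims (positivity of the quartic, the two-sheeted covering, the discriminant computation for $g$, and the identification $F_\eta\cong\PP^1$ via the second projection) which the paper simply asserts.
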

\begin{proof}
	The (topological) curve ${H}_\eta(\RR)$ has two connected components, given by points $(x:y:z)$ with $y>0$ and those with $y<0$ respectively: there is no equivalence between any points with $y>0$ and points with $y<0$ because of the weighting on ${\PP^2}$ (\cref{sec:Hdef}), and there are no points with $y=0$ because $\Delta(\mathcal{H}_\eta)\neq 0$ and $-1$ is not a square in $K$. Thus $E(\RR)$ has structure of a Lie group $S^1(\RR)\times \ZZ/2\ZZ$. Any infinite subgroup of $E(\RR)$ has dense intersection with the identity component, so $A$ has dense intersection with one of the components of ${H}_\eta(\RR)$.
	
	We can express the map ${\Phi}:{H}_\eta\to{F}_\eta$ as a composition
	\[\begin{array}{ccccc}
		{H}_\eta&\to&\PP^1 & \to& {F}_\eta \\
		(x:y:z)&\mapsto& (x:z)&\mapsto& \left(\begin{gathered}
			(-c(z^2-x^2)-d(2xz):a(z^2-x^2)+b(2xz)),\\ (z^2-x^2:2xz)
		\end{gathered}\right).
	\end{array}\]
	The first map induces a continuous surjection from each component of $\mathcal{H}_\eta(\RR)$ onto $\PP^1(\RR)$, so the image of $A$ is dense in $\PP^1(\RR)$. The second map induces a continuous surjection $\PP^1(\RR)\to {F}_\eta(\RR)$, so the image of $A$ is dense in ${F}_\eta(\RR)$.
\end{proof}

\subsection{Application to three-distance problem}\label{sec:threedistproof}

We will use \cref{thm:Qinfpoints} to prove the following statement.

\begin{cor}
	\label{cor:squaredist_param}
	There exists an infinite collection of rational functions, $\rho_n:\bbA^1_\QQ\dashrightarrow \bbA^2_\QQ$ for $n\in\ZZ$, with the following properties. For all $t\in\QQ-\{0,\pm 1\}$ and all $n\in\ZZ$, if $\rho_n$ is defined at $t$, then $\rho_n(t)$ has rational distance from each of $(0,0)$, $(0,1)$, and $(1,1)$. Further, for each $t\in\QQ-\{0,\pm 1\}$, there are only finitely many $n\in\ZZ$ for which $\rho_n$ is not defined at $t$, and the set 
	\[\{\rho_n(t):n\in \ZZ,\,\rho_n\text{ defined at }t\}\]
	is a dense subset of the line $y=\frac{2t}{1-t^2}x$ in $\RR^2$. 
\end{cor}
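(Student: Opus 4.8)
The plan is to realize the three-distance problem over the line $y=\frac{2t}{1-t^2}x$ as (a piece of) $F_\eta(\QQ)\cap(\mathcal{S}_\QQ\times\mathcal{S}_\QQ)$ for a suitable $\eta$, pull back to $H_\eta$, and then push forward the multiples of a single non-torsion point on the isomorphic elliptic curve. Concretely, from \cref{ratdistprobs} the square three-distance problem corresponds to the curve $F_\eta$ with equation $\alpha_1\alpha_2=\alpha_1+\alpha_3-1$, i.e. to the matrix $\eta$ whose associated $E_{r,s}$ is $E_{-1,1-\alpha_3}$; here $\alpha_3$ plays the role of the free parameter determining the line. So first I would spell out the dictionary: given a point $P$ on the line $y=\frac{2t}{1-t^2}x$ at rational distance from $(0,0),(0,1),(1,1)$, the slopes of the three connecting segments lie in $\mathcal{S}$ and satisfy the relation $\alpha_1\alpha_2=\alpha_1+\alpha_3-1$, and conversely such a triple of slopes in $\mathcal{S}$ reconstructs $P$; one checks that the coordinate $x$ of $P$ (hence $P$ itself, since it is on the fixed line) is a rational function of the $\alpha_i$. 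This identifies the solution set on the given line with $F_\eta(\QQ)\cap(\mathcal{S}_\QQ\times\mathcal{S}_\QQ)$ for the $\eta$ recorded in the table, modulo finitely many degenerate points.

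Next I would invoke the structural results: by \cref{prop:param} (and \cref{prop:bijection}) the set $F_\eta(\QQ)\cap(\mathcal{S}_\QQ\times\mathcal{S}_\QQ)$ is the image of $H_\eta(\QQ)$ under $\Phi$, and since $H_\eta$ has the obvious rational point coming from the degenerate configuration, \cref{prop:isomorphicelliptic} together with \cref{lem:solubletriangular} gives an explicit isomorphism $H_\eta\cong E_{-1,1-\alpha_3}$. Now apply \cref{thm:Qinfpoints} with $(r,s)=(-1,1-\alpha_3)$: the point $(-1,r)=(-1,-1)$ is non-torsion provided $r\neq 0$ (true, $r=-1$), $s\neq\pm1$, and $4r^2s\neq\pm(1-s^2)^2$. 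The condition $s\neq\pm1$ becomes $\alpha_3\neq0,2$, and the last condition becomes an explicit polynomial inequality in $\alpha_3$; for all but finitely many values of $\alpha_3$ (in particular for $\alpha_3$ in a dense set, but more importantly for the parametrized family we only need it generically) $E_{-1,1-\alpha_3}(\QQ)$ has positive rank. To get the explicit family of rational functions $\rho_n$, I would take the multiplication-by-$n$ maps on the generic fiber: over the function field $\QQ(\alpha_3)$, the point $(-1,-1)\in E_{-1,1-\alpha_3}$ is non-torsion, so $[n]\cdot(-1,-1)$ is a well-defined $\QQ(\alpha_3)$-rational point for each $n$, and transporting it through the isomorphism $E_{-1,1-\alpha_3}\cong H_\eta$, then through $\Phi$ to $F_\eta$, then through the coordinate dictionary above, yields a rational function $\rho_n:\bbA^1\dashrightarrow\bbA^2$ in the parameter $\alpha_3$. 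Reparametrizing $\alpha_3$ in terms of $t$ (the line $y=\frac{2t}{1-t^2}x$ forces a specific rational relation between $t$ and the slope from $P$ to $(0,0)$, hence to $\alpha_3$) gives the $\rho_n$ as functions of $t$.

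Finally, density: for fixed $t\in\QQ\setminus\{0,\pm1\}$ the corresponding $\alpha_3$ is a fixed rational number, and (after checking it avoids the finitely many bad values) $(-1,-1)$ is non-torsion in $E_{-1,1-\alpha_3}(\QQ)$, so the subgroup it generates is infinite. By \cref{lem:dense}, the image of this infinite subgroup under $\Phi$ is dense in $F_\eta(\RR)$, and since the coordinate dictionary is a continuous (in fact rational, generically defined) surjection onto the line $y=\frac{2t}{1-t^2}x$, the points $\{\rho_n(t):n\in\ZZ\}$ are dense in that line. The remaining bookkeeping — that only finitely many $n$ fail to be defined at a given $t$ (the bad $n$ are those where $[n](-1,-1)$ hits the finitely many base points of the rational maps or a degenerate point), and that $\rho_n(t)$ when defined genuinely lies at rational distance from the three points (immediate from landing in $\mathcal{S}\times\mathcal{S}$) — is routine.

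\textbf{The main obstacle} I anticipate is the explicit passage from the abstract point $[n](-1,-1)$ on $E_{r,s}$ back down to a clean rational function $\rho_n$ in the plane: unwinding the isomorphism of \cref{lem:solubletriangular} composed with $\Phi$ and the geometric dictionary requires choosing coordinates carefully so that the denominators are controlled and so that the finitely many exceptional $(t,n)$ can be identified. One must also be slightly careful that the "finitely many bad $n$" really is finite and not all of $\ZZ$ — this is where non-torsion of the generic point (not just each special fiber) matters, since it guarantees the $[n](-1,-1)$ are generically distinct and only finitely many collide with the base locus of the rational maps. The inequality $4r^2s\neq\pm(1-s^2)^2$ translating into excluding finitely many $t$ is a short computation, and the density argument is a direct citation of \cref{lem:dense} once the continuity of the final coordinate map is noted.
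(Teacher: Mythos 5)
Your proposal follows essentially the same route as the paper's proof: identify solutions on the line with $F_{\eta(t)}(\QQ)\cap(\mathcal{S}\times\mathcal{S})$ for $\eta(t)=\begin{psmallmatrix}1&-1\\0&s(t)\end{psmallmatrix}$ with $s(t)=1-\frac{2t}{1-t^2}$, pull back through $\Phi$ to $H_{\eta(t)}$, apply \cref{thm:Qinfpoints} to the point $(-1,-1)$ on $E_{-1,s(t)}$, get density from \cref{lem:dense}, and assemble the $\rho_n$ by composing multiplication by $n$ on the generic fiber over the parameter line with the isomorphism to $\mathcal{H}$, the map $\Phi$, and an explicit coordinate map back onto the line. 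Your handling of the finitely many undefined $n$ (via non-torsion forcing the multiples to be distinct, so only finitely many land in the base locus of the rational maps) also matches the paper.

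The one point that needs repair is your repeated hedge that the hypotheses of \cref{thm:Qinfpoints} hold ``for all but finitely many values of $\alpha_3$'' and that a fixed $t$ works ``after checking it avoids the finitely many bad values.'' The corollary asserts density for \emph{every} $t\in\QQ\setminus\{0,\pm1\}$, with no exceptional set, so it is not enough that the exceptional locus is finite: you must show it is empty along this one-parameter family. With $r=-1$ and $s=s(t)$ this means verifying that $\frac{2t}{1-t^2}\ne 0,2$ for rational $t\ne 0,\pm 1$ (so $s\ne\pm1$) and that $4s=\pm(1-s^2)^2$ has no rational solutions of the form $s=1-\frac{2t}{1-t^2}$; the paper carries out exactly this check (reducing the latter to the non-existence of rational solutions of $\bigl|\frac{1-u^4}{2u}\bigr|=1$). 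As written, your argument only yields the statement for $t$ outside an unidentified finite set, which is strictly weaker than the claim. Everything else is the paper's argument in outline.
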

\begin{proof}
	For the sake of clarity, we begin by proving the weaker result mentioned in the introduction: for each $t\in\QQ-\{0,\pm 1\}$, the line $y=\frac{2t}{1-t^2}x$ has a dense set of points that have rational distance from each of $(0,0)$, $(0,1)$, and $(1,1)$. Once this is done, we will explain how the proof can be modified to allow for families of solutions parametrized by $t$.
	
	Let $t\in\QQ-\{0,\pm 1\}$, and set $s(t):=1-\frac{2t}{1-t^2}$. There is no rational solution to $1=\frac{2t}{1-t^2}$, so $\eta(t):=\begin{psmallmatrix}
		1&-1\\0&s(t)
	\end{psmallmatrix}$ is an element of $\GL_2(\QQ)$. We have $|s(t)|\neq 1$ because we excluded the case $t=0$ and there is no rational solution to $2=\frac{2t}{1-t^2}$. Further, there is no rational solution to $\left|\frac{1-u^4}{2u}\right|=1$. Hence, by \cref{thm:Qinfpoints}, $\mathcal{H}_{\eta(t)}(\QQ)$ is infinite. By \cref{lem:dense}, this implies that the set of $((u_1:v_1),(u_2:v_2))\in\mathcal{S}\times \mathcal{S}$ satisfying $u_1u_2+v_1v_2=u_1v_2+\frac{2t}{1-t^2}v_1v_2$ (the defining equation of $F_{\eta(t)}$) is dense in $F_{\eta(t)}(\RR)$.

	Now define the rational function $z:F_{\eta(t)}\dashedrightarrow\bbA^2$ by  
	\begin{align}\label{eq:defz}
		z((u_1:v_1),(u_2:v_2)):=\left(\frac{(1-t^2)v_1}{(1-t^2)u_1+2tv_1},\,\frac{2tv_1}{(1-t^2)u_1+2tv_1}\right).
	\end{align}
	The map $z$ restricts to a homeomorphism
	\[F_{\eta(t)}(\RR)\setminus\left\{\left((-2t:1-t^2),(1-t^2:2t)\right)\right\}\to \left\{(x,y)\in\RR^2:y=\tfrac{2t}{1-t^2}x\right\},\]
	So after removing a single point from $F_{\eta(t)}(K)\cap (\mathcal{S}\times\mathcal{S})$, the remainder maps to a dense subset of the line $y=\frac{2t}{1-t^2}x$. For each $(\alpha_1,\alpha_2)\in F_{\eta(t)}(\QQ)\cap (\mathcal{S}\times\mathcal{S})$ other than $\left((-2t:1-t^2),(1-t^2:2t)\right)$, the point $(x,y):=z(\alpha_1,\alpha_2)$ satisfies
	\begin{align*}
		x^2+y^2&=\left(\frac{(1+t^2)v_1}{u_1(1-t^2)+2tv_1}\right)^2,\\
		x^2+(1-y)^2&=\left(\frac{(1-t^2)}{u_1(1-t^2)+2tv_1}\right)^2(u_1^2+v_1^2),\\
		(1-x)^2+(1-y)^2&=\left((1-t^2)\frac{u_1+\frac{2t}{1-t^2}v_1-v_1}{(1-t^2)u_1+2tv_1}\right)^2+\left(\frac{(1-t^2)u_1}{(1-t^2)u_1+2tv_1}\right)^2\\
		&=\left(\frac{(1-t^2)u_1u_2}{((1-t^2)u_1+2tv_1)v_2}\right)^2+\left(\frac{(1-t^2)u_1}{(1-t^2)u_1+2tv_1}\right)^2\\
		&=\left(\frac{(1-t^2)u_1}{((1-t^2)u_1+2tv_1)v_2}\right)^2(u_2^2+v_2^2).
	\end{align*}
	Since $\alpha_1,\alpha_2\in\mathcal{S}$, these are all squares in $\QQ^\times$, so this gives a solution to the three-distance problem.

	We now return to the problem of producing explicit parametrizations of solutions in terms of $t$. For this, note that $t\mapsto \eta(t)$ defines a morphism $V:=\bbA^1-\{0,\pm 1\}\to\GL_2$. We will define a rational map $\rho_n:V\dashedrightarrow\bbA^2$ by a composition
	\[\rho_n:V\xrightarrow{\tau_n} E'\xrightarrow{\varepsilon} \mathcal{H}'\xrightarrow{\Phi'} \mathbb{P}^1\times \mathbb{P}^1\times V\xrightarrow{z'} \bbA^2,\]
	where each variety besides $\bbA^2$ is a scheme over $V$ and each map besides $z'$ is a morphism over $V$. We consider each of these maps in turn.
	\begin{itemize}
		\item Let $E$ be the subvariety of $\PP^2\times\GL_2$ parametrizing the Jacobian varieties of $\mathcal{H}$ (defined by \cref{eq:jacobian}). Let $E'$ be the fiber product of $V$ with $E$, so that $E'_t=E_{\eta(t)}$ for all $t\in V(\QQ)$. We have a section $V\to E'$ given by $t\mapsto (-1,-1)$. Using the group law on the generic fiber of $E'$, define the rational map $\tau_n:V\dashedrightarrow E'$ by the property that $\tau_n(t)=n(-1,-1)\in E'_t(\QQ)$ for all $t\in V(\QQ)$. The proof of \cref{thm:Qinfpoints} shows that $(-1,-1)$ is non-torsion in $E'_t(\QQ)$ for all $t\in V(\QQ)$, so for each such $t$, the set $\{\tau_n(t):n\in\ZZ\}$ is an infinite subgroup of $E'_t(\QQ)$.
		\item The fiber product of $V$ with $\mathcal{H}$ is a one-parameter family $\mathcal{H}'$ of curves over $V$, with the property that $\mathcal{H}'_t=\mathcal{H}_{\eta(t)}$. We have a section $V\to \mathcal{H}'$ given by $t\mapsto (0:1:1)$, allowing us to define a birational map $\varepsilon:E'\dashedrightarrow \mathcal{H}'$ over $\QQ$ sending the zero section of $E'$ to the given section of $\mathcal{H}'$. This map restricts to an isomorphism on all fibers over points in $V(\QQ)$.
		\item The rational map $\Phi':\mathcal{H}'\dashedrightarrow\mathbb{P}^1\times \mathbb{P}^1\times V$ is defined by
		\[
			((x:y:z),t)\mapsto \left((-d(t)(2xz):(z^2-x^2)-(2xz)),(z^2-x^2:2xz),\,t\right).
		\]
		Note that after restricting to a fiber $\mathcal{H}'_t$, the first two components of $\Phi'$ agree with the map $\Phi:{H}_{\eta(t)}\to F_{\eta(t)}$ (\cref{prop:bijection}). 
		So for any $t\in V(\QQ)$, %there are finitely many values of $n$ such that $\Phi'\circ\varepsilon\circ \tau_n$ is not defined at $t$, and 
		the set
		\[S_t:=\{(\Phi'\circ\varepsilon\circ \tau_n)(t):n\in\ZZ\}\]
		is contained in $F_{\eta(t)}(\QQ)\cap(\mathcal{S}\times\mathcal{S})\times \{t\}$, and is a dense subset of $F_{\eta(t)}(\RR)\times \{t\}$  by \cref{lem:dense}.
		\item The rational map $z':\PP^1\times\PP^1\times V\dashedrightarrow \bbA^2$ is defined on an appropriate dense open subset by
		\[z'((u_1:v_1),(u_2:v_2),t)=\left(\frac{(1-t^2)v_1}{(1-t^2)u_1+2tv_1},\,\frac{2tv_1}{(1-t^2)u_1+2tv_1}\right).\]
		Note that when restricted to $F_{\eta(t)}\times\{t\}$, the map agrees with $z:F_{\eta(t)}\dashedrightarrow \bbA^2$ defined in \cref{eq:defz}. So the same proof as above shows that for each $t$, $z'$ maps $S_t$ minus a point to a dense subset of the line $y=\frac{2t}{1-t^2}x$ consisting of solutions to the three-distance problem.\qedhere
	\end{itemize}

\end{proof}

\subsection{Special cases}\label{sec:sumprod}

Some rational configuration problems fall under the exceptional cases of \cref{thm:Qinfpoints}. We consider a few of these here. Let
\begin{align*}
	\mathcal{S}':=\left\{\frac{u}{v}\in\QQ\mid (u:v)\in \mathcal{S}\right\}
\end{align*}
be the affine elements of $\mathcal{S}$.

\begin{prop}\label{prop:pythslopesum}
	Let $\alpha_3\in\mathcal{S}'$. There exist infinitely many pairs $\alpha_1,\alpha_2\in\mathcal{S}'$ such that $\alpha_1+\alpha_2=\alpha_3$.
\end{prop}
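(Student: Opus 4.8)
The statement to prove is \cref{prop:pythslopesum}: for any fixed $\alpha_3\in\mathcal{S}'$, there are infinitely many pairs $\alpha_1,\alpha_2\in\mathcal{S}'$ with $\alpha_1+\alpha_2=\alpha_3$. The first thing I would do is translate this into the language of the family $H_\eta$. A pair of slopes $\alpha_1,\alpha_2$ with $\alpha_1+\alpha_2=\alpha_3$ is precisely a point of $F_\eta(\QQ)\cap(\mathcal{S}\times\mathcal{S})$ for the matrix $\eta=\begin{psmallmatrix}1&1\\0&\alpha_3\end{psmallmatrix}$: the defining equation $au_1u_2+bu_1v_2+cv_1u_2+dv_1v_2=0$ with $(a,b,c,d)=(1,1,0,\alpha_3)$ reads $u_1u_2+u_1v_2+\alpha_3 v_1v_2=0$, which (setting $\alpha_i=u_i/v_i$) is $\alpha_1\alpha_2+\alpha_1+\alpha_3=0$; a sign normalization $\alpha_2\mapsto-\alpha_2$ (note $\mathcal{S}'$ is closed under negation, since $u^2+v^2=(-u)^2+v^2$) turns this into $\alpha_1\alpha_2-\alpha_1-\alpha_3=0$, and taking instead $\eta=\begin{psmallmatrix}1&-1\\0&\alpha_3\end{psmallmatrix}$ gives exactly $\alpha_1+\alpha_2=\alpha_3$ after solving for one variable — in short, the right choice is $\eta$ with $(r,s)=(\alpha_3,1)$ in the normal form of \cref{prop:isomorphicelliptic}, i.e.\ the curve $E_{\alpha_3,1}$ as indicated in \cref{ratdistprobs}. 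By \cref{prop:param} (via \cref{prop:bijection}), it suffices to show $H_\eta(\QQ)$ is infinite, and by \cref{lem:dense} this would even give density.

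**The elliptic curve.** By \cref{prop:isomorphicelliptic} and the explicit change of variables of \cref{lem:solubletriangular}, $H_\eta$ is isomorphic to $E_{r,s}:y^2=x^3+(1+r^2+s^2)x^2+s^2x$ with $(r,s)=(\alpha_3,1)$ — here $s=1\neq 0$ and, since $\alpha_3\neq 0$ would be needed for $(r,s)\neq(0,\pm1)$, one must first dispose of the degenerate cases $\alpha_3\in\{0\}$ (and check what happens when the associated $s$ equals $\pm1$, which it does here). This is exactly the exceptional case $s=\pm1$ of \cref{thm:Qinfpoints}, so $(-1,r)$ is a torsion point and we get no information from that theorem directly. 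Instead I would analyze $E_{\alpha_3,1}:y^2=x^3+(2+\alpha_3^2)x^2+x$ on its own. The plan is to exhibit a second rational point and show the subgroup it generates is infinite. A natural candidate: since $s^2=1$, the point with $x=1$ gives $y^2=1+(2+\alpha_3^2)+1=4+\alpha_3^2$; but $\alpha_3\in\mathcal{S}'$ means $\alpha_3^2+1$ is a square, not $\alpha_3^2+4$, so that is not automatically rational. A better route: write $\alpha_3=\tfrac{u}{v}$ with $u^2+v^2=w^2$, clear denominators to get the curve $y^2=x^3+(2v^2+u^2)x^2+v^4 x$ (after scaling $E_{r,s}$), and look for points coming from the geometry — e.g.\ the three "obvious" $2$-torsion-adjacent points and the point corresponding to the degenerate configuration $\alpha_1=0,\alpha_2=\alpha_3$, then show that adding a degenerate point to itself or to another produces a non-degenerate point of infinite order, exactly the mechanism highlighted in the introduction's comparison with the congruent number problem.

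**The main obstacle.** The hard part is proving that the point we produce has infinite order, i.e.\ that $E_{\alpha_3,1}(\QQ)$ has positive rank for every $\alpha_3\in\mathcal{S}'$ (uniformly in $\alpha_3$, so a case-by-case BSD check is not an option). I would try a descent / explicit formula: parametrize, for $\alpha_3=u/v$ with $u^2+v^2=w^2$, an actual solution $(\alpha_1,\alpha_2)\in\mathcal{S}'\times\mathcal{S}'$ to $\alpha_1+\alpha_2=\alpha_3$ as a rational function of a free parameter (the classical "rational point on the circle" parametrization of $\mathcal{S}$ should let one write down $\alpha_1=\tfrac{1-\lambda^2}{2\lambda}$-type expressions and solve $\alpha_1+\alpha_2=\alpha_3$ for $\alpha_2$, then impose $\alpha_2\in\mathcal{S}'$, which cuts out a conic or another curve in $\lambda$ that, having an obvious rational point, is rational). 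Concretely: $\alpha_2=\alpha_3-\alpha_1$, and $\alpha_2\in\mathcal{S}'$ iff $(\alpha_3-\alpha_1)^2+1$ is a square; substituting $\alpha_1=\tfrac{1-\lambda^2}{2\lambda}$ makes this $(\alpha_3-\tfrac{1-\lambda^2}{2\lambda})^2+1=\square$, a quartic in $\lambda$ which is a square at $\lambda$ making $\alpha_1=0$, hence defines an elliptic (or rational) curve with a known point — if it is rational we are done immediately, if elliptic we again need positive rank. The cleanest finish, which I would aim for, is to find a genuine one-parameter (in $\lambda$, with coefficients in $\QQ(\alpha_3)$) family of solutions, i.e.\ to show the relevant curve over $\QQ(\alpha_3)$ is rational; pulling that back shows infinitely many solutions for each $\alpha_3\in\mathcal{S}'$ and simultaneously gives the explicit parametrizations. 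Verifying the found family is non-constant (distinct $\lambda$ give distinct $\alpha_1$) is then a one-line check.
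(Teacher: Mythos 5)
Your setup is right: you correctly reduce the statement to showing that the Jacobian $E_{\alpha_3,1}\colon y^2=x^3+(2+\alpha_3^2)x^2+x$ has infinitely many rational points, you correctly observe that this is the excluded case $s=\pm1$ of \cref{thm:Qinfpoints} so that theorem gives nothing, and your instinct that the extra point should come from the degenerate solution $(\alpha_1,\alpha_2)=(\alpha_3,0)$ --- which exists on $H_\eta$ precisely because the hypothesis $\alpha_3\in\mathcal{S}'$ makes $1+\alpha_3^2$ a square --- is exactly the mechanism the paper uses. But the proof stops where the real work begins. You never write down the point, and neither of your two proposed ways of certifying infinite order goes through as stated. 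The fallback route is actually doomed: the condition $(\alpha_3-\tfrac{1-\lambda^2}{2\lambda})^2+1=\square$ is a quartic in $\lambda$ of genus one for generic $\alpha_3$ (it is essentially $E_{\alpha_3,1}$ again), so hoping it is rational over $\QQ(\alpha_3)$ just returns you to the positive-rank question you started with. And ``adding degenerate points to produce a non-degenerate point of infinite order'' is not yet an argument --- the sum of torsion points is torsion, so you must still prove some explicit point is non-torsion, uniformly in $\alpha_3$, which you flag as the main obstacle and then leave open.

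The paper closes this gap with two concrete moves. First, it parametrizes the hypothesis: writing $\alpha_3=\frac{1-t^2}{2t}$ with $t\in\QQ\setminus\{0,\pm1\}$ (this is exactly what $\alpha_3\in\mathcal{S}'$ buys, up to the trivial case $\alpha_3=0$), the curve becomes $E\colon y^2=x^3+\bigl(\bigl(\frac{1-t^2}{2t}\bigr)^2+2\bigr)x^2+x$ over $\QQ(t)$, and it carries the explicit section $P=\bigl(t,\tfrac12(t+1)^2\bigr)$, since $t^3+\bigl(\tfrac{(1-t^2)^2}{4t^2}+2\bigr)t^2+t=\tfrac{(t+1)^4}{4}$. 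Second, non-torsion is verified uniformly in $t$ by Mazur's theorem: if $P$ were torsion its order would be at most $12$, so one computes $nP$ for $n=1,\dots,12$ as rational functions of $t$ and checks that the denominators have no rational roots, whence $nP\neq O$ for every rational specialization. Without an explicit point of this kind and a uniform torsion exclusion, your argument does not establish the proposition.
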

That is, for any rectangle with rational distances between every two vertices, there are infinitely many ways to split it into two rectangles with rational distances between every two vertices.
\begin{proof}
	Let $\alpha_3=\frac{1-t^2}{2t}$ for some $t\in\QQ-\{0,\pm 1\}$. The equation $x_1+x_2-\alpha_3=0$ defines $F_\eta$ for $\eta=\begin{psmallmatrix}
		0&1\\1&-\alpha_3
	\end{psmallmatrix}$, and $\mathcal{H}_\eta$ (which contains the rational point $(0:1:1)$) is isomorphic to its Jacobian (as in \cref{eq:jacobian}),
	\[E:y^2=x^3+\left(\left(\frac{1-t^2}{2t}\right)^2+2\right)x^2+x.\]
	We consider $E_\eta$ as an elliptic curve over the function field $\QQ(t)$, and note that $E(\QQ(t))$ has a rational point $P=\left(t,\frac12(t+1)^2\right)$. By computing $nP$ for $n=1,\ldots,12$ and checking that the denominators of the coordinates have no rational roots, we can confirm that $P$ is non-torsion for all $t\in\QQ-\{0,\pm 1\}$. Hence $\mathcal{H}_\eta$ has infinitely many rational points, so $F_\eta(\QQ)\cap (\mathcal{S}\times \mathcal{S})$ is infinite by \cref{prop:bijection}.
\end{proof}
This allows us to prove that every rational number can be written as a sum of three elements of $\mathcal{S}'$ in infinitely many ways; in other words, for any rational $t>0$, there are infinitely many ways to cut a $1\times t$ rectangle into three rectangles, each of which has rational distances between every pair of vertices. We also prove that every rational number can be written as a product of three elements of $\mathcal{S}'$ in infinitely many ways.
\begin{proof}[Proof of \cref{prop:threesum}]
	The equation $x_1+2x_2-t=0$ defines $F_\eta$ for $\eta=\begin{psmallmatrix}
		0&1\\2&-t
	\end{psmallmatrix}$. Now $\mathcal{H}_\eta$ is isomorphic to $\mathcal{H}_{\eta'}$ for $\eta'=\begin{psmallmatrix}
		1&-t/2\\0&-1/2
	\end{psmallmatrix}$, and by \cref{thm:Qinfpoints}, $\mathcal{H}_\eta(\QQ)$ is infinite for all $t\neq 0$. Hence every nonzero $t\in\QQ$ can be written as $\alpha_1+\alpha_2+\alpha_2$ for infinitely many pairs $(\alpha_1,\alpha_2)\in\mathcal{S}_\QQ^2$. 	The case $t=0$ follows from \cref{prop:pythslopesum} because $\mathcal{S}$ is closed under negation.

	Next we will show that for any $t\in\QQ^\times$, there exists $u\in\QQ-\{0,\pm 1\}$ such that when $s=-t\left(\frac{2u}{1-u^2}\right)$, the polynomial $x_1x_2+s$ has infinitely many solutions with $x_1,x_2\in\mathcal{S}'$. Each of these solutions can then be multiplied by $\frac{1-u^2}{2u}\in\mathcal{S}'$ to exhibit $t$ as a product of three elements of $\mathcal{S}'$.
	
	Let $\eta=\begin{psmallmatrix}
		1&0\\0&s
	\end{psmallmatrix}$. We consider the elliptic curve
	\[E_\eta:y^2=x(x+1)(x+s^2)=x(x+1)\left(x+t^2\left(\frac{2u}{1-u^2}\right)^2\right).\]
	If we set $u=t^2+2$, then the elliptic curve
	\[y^2=x(x+1)\left(x+t^2\left(\frac{2(t^2+2)}{1-(t^2+2)^2}\right)^2\right)\]
	over $\QQ$ has a point
	\[\left(\frac{t^2 (t^2+1)^2 (t^2+2)}{(t^2+3)^2},\,\frac{t^2 (t^2+2) (t^8+4t^6+6t^4+8t^2+9)}{(t^2+3)^3}\right),\]
	which has infinite order when $t\neq 0,\pm 1$. So for all $t\in\QQ-\{0,\pm 1\}$, $x_1x_2-t\left(\frac{2u}{1-u^2}\right)=0$ has infinitely many solutions $x_1,x_2\in\mathcal{S}'$, so $t$ can be written as a product of three elements of $\mathcal{S}'$ in infinitely many different ways.
	
	We finally must handle $t=\pm 1$. In this case, we can set $u=\frac{5}{6}$. For $\eta=\begin{psmallmatrix}
		1&0\\0&\pm\frac{60}{11}
	\end{psmallmatrix}$ we have the elliptic curve
	\[E_\eta:y^2=x^3+\left(1+\left(\frac{60}{11}\right)^2\right)x^2+\left(\frac{60}{11}\right)^2x,\]
	which has a non-torsion point $(-\frac{12}{11},\frac{204}{121})$ (in fact $E_\eta(\QQ)$ has rank $2$). Thus there are infinitely many Pythagorean solutions of $x_1x_2\mp \frac{60}{11}=0$, allowing us to write $\pm 1$ as a product $\alpha_1\alpha_2\frac{11}{60}$ of three elements of $\mathcal{S}'$ in infinitely many ways.
\end{proof}

\begin{rmk}
	The substitution $s=t^2+2$ was found essentially by trial and error, guided by inspiration from a MathOverflow answer by Siksek~\cite{siksek} describing how to find a positive rank subfamily of the family $y^2=x(x+1)(x+(\frac{1-s}{s})^2)$, and from Naskr\k{e}cki~\cite{naskrecki} who used a similar method to find a positive rank subfamily of the curve $y^2=x(x-1)\left(x-\left(\frac{2s}{1-s^2}\right)^2\right)$.
	
	For the $t=1$ case, the existence of a solution to $\alpha_1\alpha_2\alpha_3=1$ is equivalent to the existence of a body cuboid \cref{sec:cuboid}. The existence of a body cuboid with edge lengths $(240,117,44)$ leads to the choice of $u$.
\end{rmk}

By \cref{prop:pythslopesum}, every element of $\mathcal{S}'$ can be written as a sum of two elements of $\mathcal{S}'$ in infinitely many ways, but we have no comparable result for products. 
A natural question then is to determine which rational numbers $t$ can be written as a product of two elements of $\mathcal{S}'$ in infinitely many ways. This line of inquiry is explored in more depth in \cite{loveroot}.

\printbibliography

\end{document}